\documentclass[12pt]{article}

\usepackage{mathrsfs}
\usepackage[utf8x]{inputenc}
\usepackage{amsfonts}
\usepackage{amssymb}
\usepackage{amsmath}
\usepackage{amsthm}
\usepackage{lipsum}
\usepackage{graphicx}
\usepackage{float}
\usepackage{mathrsfs}
\usepackage{color}
\usepackage{bbm}
\usepackage[all]{xy}
\usepackage[mathscr]{eucal}
\usepackage{afterpage}
\usepackage{relsize}
\usepackage{mathtools}
\usepackage{enumitem}
\usepackage{tikz}
\usepackage{dsfont}
\usepackage{multirow, array}

\usepackage[top=1.2in, bottom=1.2in, left=1.2in, right=1.2in]{geometry}

\usepackage{enumitem}
\usepackage{color}

\title{De Rham's theorem for Orlicz cohomology in the case of Lie groups}
\author{Emiliano Sequeira}
\date{}

\usepackage{lipsum}

\newcommand{\R}{\mathbb{R}}

\newcommand{\Z}{\mathbb{Z}}
\newcommand{\N}{\mathbb{N}}

\newcommand{\Lie}{\mathrm{Lie}}

\newcommand{\supp}{\operatorname{supp}}
\newcommand{\Vol}{\operatorname{Vol}}
\newcommand{\Ker}{\operatorname{Ker}}
\newcommand{\I}{\operatorname{Im}}

\newtheorem{theorem}{Theorem}[section]
\newtheorem{proposition}[theorem]{Proposition}
\newtheorem{lemma}[theorem]{Lemma}
\newtheorem{corollary}[theorem]{Corollary}

\theoremstyle{definition}

\newtheorem{remark}[theorem]{Remark}
\newtheorem{example}[theorem]{Example}



\newcommand{\Addresses}{{
\bigskip
\footnotesize

\emph{Centro de Matem\'atica, Universidad de la Rep\'ublica, Uruguay};\par\nopagebreak 

\textit{E-mail address}, \texttt{esequeira@cmat.edu.uy}
}}

\begin{document}

\maketitle

\begin{abstract}
We prove the equivalence between the simplicial Orlicz cohomology and the Orlicz-de Rham cohomology in the case of Lie groups. Since the first one is a quasi-isometry invariant for uniformly contractible simplicial complexes with bounded geometry, we obtain the invariance of the second one in the case of contractible Lie groups. We also define the Orlicz cohomology of a Gromov-hyperbolic space relative to a point on its boundary at infinity, for which the same results are true.
\end{abstract}

\section{Introduction}

\subsection{Motivation}

Orlicz cohomology has been studied in recent works (\cite{KP,C,K17,GK19,BFP}) as a natural generalization of $L^p$-cohomology. These kind of cohomology theories can be defined in different contexts, which are related by some equivalence theorems . A motivation to study them is that they provide quasi-isometry invariants, so they can be applied to quasi-isometry classification problems (see for example \cite{Pa08,C}).


For $L^p$-cohomology we have a de Rham-type theorem, which establishes an equivalence between its simplicial version and its de Rham version (see \cite{GKS4,Pa95,G}). This equivalence is important to work with $L^p$-cohomology because it allows to use one or the other as appropiated. For example, one can prove the quasi-isometry invariance of the simplicial $L^p$-cohomology and then conclude that the de Rham $L^p$-cohomology is also invariant under some hypothesis. 

In the case of Orlicz cohomology, it is proved in \cite{C} the equivalence between both versions only in degree $1$. We present a proof in all degrees in the case of Lie groups equipped with left-invariant metrics. This proof has been obtained trying to improve the results of Pansu and Carrasco on the large scale geometry of Heintze groups, an special class of Lie groups that characterizes all connected homogeneous Riemannian manifolds with negative curvature. 

Finally, a relative version of the $L^p$-cohomology can simplify the computations in the case of Gromov-hyperbolic spaces (see \cite{S}), thus it can also be important to consider a relative version of the Orlicz cohomology and prove the de Rham's theorem in that context.

\subsection{Main definitions}

We say that a real function $\phi:\R\to [0,+\infty)$ is a \textit{Young function} if: 
\begin{itemize}
    \item it is even and convex; and
    \item $\phi(t)=0$ if, and only if, $t=0$.
\end{itemize}

Observe that every Young function $\phi$ satisfies
$$\lim_{t\to +\infty}\phi(t)=+\infty.$$

Let $(Z,\mu)$ be a measure space and $\phi$ a Young function. The \textit{Luxembourg norm associated with} $\phi$ of a measurable function $f:Z\to \overline{\R}=[-\infty,+\infty]$ is defined by 
$$\|f\|_{L^\phi}=\inf\left\{\gamma>0 : \int_Z\phi\left(\frac{f}{\gamma}\right)d\mu \leq 1\right\}\in [0,+\infty].$$
The \textit{Orlicz space of} $(Z,\mu)$ \textit{associated with }$\phi$ is the Banach space
$$L^\phi(Z,\mu)=\frac{\{f:Z\to \overline{\R}\text{ measurable}: \|f\|_{L^\phi}<+\infty\}}{\{f:Z\to \overline{\R}\text{ measurable}: \|f\|_{L^\phi}=0\}}.$$
It is not difficult to see that $\|f\|_{L^\phi}=0$ if, and only if, $f=0$ almost everywhere.  

If $\mu$ is the counting measure on $Z$, we denote $L^\phi(Z,\mu)=\ell^\phi(Z)$ and $\|\ \|_{L^\phi}=\|\ \|_{\ell^\phi}$. 
Observe that if $\phi$ is the function $t\mapsto |t|^p$, then $L^\phi(Z,\mu)$ is the space $L^p(Z,\mu)$. We refer to \cite{RR} for a background about Orlicz spaces.

\begin{remark}\label{ObsEqivalenciaOrlicz}
If $K\geq 1$ is any constant, the identity map $\mathrm{Id}:L^{K\phi}(Z,\mu)\to L^{\phi}(Z,\mu)$ is clearly continuous and bijective, thus it is an isomorphism by the open mapping theorem. This implies that the norms $\|\text{ }\|_{L^{K\phi}}$ and $\|\text{ }\|_{L^\phi}$ are equivalent for all $K> 0$.
\end{remark}

We say that a simplicial complex $X$ equipped with a length distance has \textit{bounded geometry} if it has finite dimension and there exist a constant $C>0$ and a function $N:[0,+\infty)\to\N$ such that
\begin{enumerate}
    \item the diameter of every simplex is bounded by $C$;
    \item for every $r\geq 0$, the number of simplices contained in a ball of radius $r$ is bounded by $N(r)$.
\end{enumerate}

Denote by $X^k$ the set of $k$-simplices in $X$ and consider the cochain complex
\medskip
$$\ell^\phi(X^0)\stackrel{\delta}{\rightarrow} \ell^\phi(X^1) \stackrel{\delta}{\rightarrow} \ell^\phi(X^2) \stackrel{\delta}{\rightarrow}\ell^\phi(X^3) \stackrel{\delta}{\rightarrow}\cdots$$
\medskip
where $\delta$ is the usual coboundary operator ($\delta\theta(\sigma)=\theta(\partial\sigma)$). The \textit{Orlicz cohomology} of $X$ associated with the Young function $\phi$ (or, more simply, the $\ell^\phi$\textit{-cohomology} of $X$) is the family of topological vector spaces
$$\ell^\phi H^k(X)=\frac{\Ker \delta_k}{\I \delta_{k-1}}.$$
Since these spaces are not in general Banach spaces, it is something convenient to consider the \textit{reduced Orlicz cohomology} of $X$ associated with $\phi$ (or \textit{reduced} $\ell^\phi$\textit{-cohomology} of $X$) as the family of Banach spaces
$$\ell^\phi \overline{H}^k(X)=\frac{\Ker \delta_k}{\overline{\I \delta_{k-1}}}.$$

If $X$ is Gromov-hyperbolic and $\xi$ is a point on its boundary at infinity $\partial X$, then one can consider the \textit{relative Orlicz cohomology} of the pair $(X,\xi)$ associated with $\phi$ (or \textit{relative} $\ell^\phi$\textit{-cohomology} of $(X,\xi)$) as the family of topological vector spaces
$$\ell^\phi H^k(X,\xi)=\frac{\Ker \delta|_{\ell^\phi(X^k,\xi)}}{\I \delta|_{\ell^\phi(X^{k-1},\xi)}},$$
where $\ell^\phi(X^k,\xi)$ is the subspace of $\ell^{\phi}(X^k)$ that consists of all $k$-cochains that are zero on a neighborhood of $\xi$ in $\overline{X}=X\cup\partial X$. We say that a $k$-cochain on $X$ is zero or vanishes on a neighborhood of $\xi$ if there exists an open set $U$ in $\overline{X}$ which contains $\xi$, such that $\theta(\xi)=0$ for every $k$-simplex $\sigma$ contained in $U$. See for example \cite[Charpter 4]{BHK} for a description of the topology in $\overline{X}$.\\


A map $F:X\to Y$ between two metric spaces (where the metric is denoted by $|\cdot-\cdot|$ in both cases) is a quasi-isometry if there exist two constants $\lambda\geq 1$ and $\epsilon\geq 0$ such that 
\begin{enumerate}
    \item For every $x_1,x_2\in X$,
    $$\lambda^{-1}|x_1-x_2|-\epsilon\leq |F(x_1)-F(x_2)|\leq \lambda|x_1-x_2|+\epsilon.$$
    \item For every $y\in Y$ there exists $x\in X$ such that $|F(x)-y|\leq\epsilon$.
\end{enumerate}

Every quasi-isometry $F:X\to Y$ admits a quasi-inverse. That is, quasi-isometry $\overline{F}:Y\to X$ such that $F\circ\overline{F}$ and $\overline{F}\circ F$ are at bounded uniform distance to the identity.

If $X$ and $Y$ are Gromov-hyperbolic, then the quasi-isometry $F$ induces a homeomorphism between their boundaries $\partial F:\partial X\to\partial Y$ (see for example \cite[Charpter 7]{GH}). We will use also the notation $F(\xi)=\partial F(\xi)$ if $\xi\in\partial X$.

We say that a metric space $X$ is \textit{uniformly contractible} if it is contractible and there is a function $\psi:[0,+\infty)\to [0,+\infty)$ such that every ball $B(x,r)=\{x'\in X : |x'-x|<r\}$ is contractible into the ball $B(x,\psi(r))$. 

\begin{theorem}\label{invarianza}
Let $X$ and $Y$ be two uniformly contractible simplicial complexes with bounded geometry and $\phi$ a Young function. If $X$ and $Y$ are quasi-isometric, then: 
\begin{enumerate}
    \item The cochain complexes $(\ell^\phi(X^*),\delta)$ and $(\ell^\phi(Y^*),\delta)$ are homotopically equivalent and as a consequence their cohomologies and reduced cohomologies are isomorphic.
    
    \item If $X$ is Gromov-hyperbolic and $\xi\in\partial X$, the cochain complexes $(\ell^\phi(X^*,\xi),\delta)$ and $(\ell^\phi(Y^*,F(\xi)),\delta)$ are homotopically equivalent and as a consequence their cohomologies are isomorphic.
\end{enumerate}
\end{theorem}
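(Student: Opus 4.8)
The plan is to follow the classical proof of the quasi-isometry invariance of simplicial $L^p$-cohomology, replacing the $L^p$ estimates by the convexity of the Young function together with the norm comparison in Remark \ref{ObsEqivalenciaOrlicz}. Everything reduces to two tasks: attaching to each quasi-isometry a bounded cochain map between the $\ell^\phi$-complexes, and showing that the two compositions arising from $F$ and its quasi-inverse $\overline{F}$ are chain homotopic to the identity through bounded operators. Both the cochain maps and the homotopies will be operators of \emph{bounded propagation}, meaning that the value of the output on a simplex $\sigma$ depends only on the values of the input on simplices within a fixed distance of $\sigma$; this is the feature that makes them bounded on $\ell^\phi$ and, later, compatible with the relative subspaces.

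First I would build, out of $F\colon X\to Y$, a chain map $f_\#\colon C_*(X)\to C_*(Y)$ of bounded propagation by induction on skeleta. On vertices one sets $f_\#(v)$ to be a vertex of $Y$ within bounded distance of $F(v)$. Since every simplex has diameter at most $C$ and $F$ is a quasi-isometry, the images of the vertices of any simplex lie in a ball of radius bounded independently of the simplex; uniform contractibility of $Y$ then lets one fill, simplex by simplex, so that $f_\#\sigma$ is a chain supported in a ball of controlled radius, with boundedly many terms and bounded coefficients (bounded geometry controls both). Dualizing gives the cochain map $f^*\theta(\sigma)=\theta(f_\#\sigma)$. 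Its boundedness on $\ell^\phi$ comes from the following mechanism, cleanest when $f_\#$ is induced by a simplicial map: bounded geometry and the quasi-isometry condition bound by some $M$ the number of $k$-simplices of $X$ sent to a fixed $k$-simplex of $Y$, so if $\|\theta\|_{\ell^\phi}=\gamma$ then $\sum_\sigma \phi(f^*\theta(\sigma)/\gamma)\le M\sum_\tau\phi(\theta(\tau)/\gamma)\le M$, and convexity (which gives $\phi(t/M)\le M^{-1}\phi(t)$ for $M\ge 1$) yields $\|f^*\theta\|_{\ell^\phi}\le M\gamma$; the general bounded-propagation case follows the same way after one application of Jensen's inequality, in the spirit of Remark \ref{ObsEqivalenciaOrlicz}. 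The same construction applied to $\overline{F}$ produces a bounded cochain map $g^*$ in the opposite direction.

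The heart of the proof is the chain homotopy. The composite $g_\#\circ f_\#$ is a chain map on $X$ at bounded distance from the identity, so it suffices to construct a homotopy operator $H$ between it and $\mathrm{id}$ that again has bounded propagation; its dual $H^\vee$ on cochains then witnesses $(g\circ f)^*\simeq \mathrm{id}$ on $\ell^\phi(X^*)$. I would construct $H$ skeleton by skeleton, exactly as in the acyclic-models argument: for each simplex $\sigma$ the prism joining $\sigma$ to its image is a cycle of bounded diameter, which by uniform contractibility bounds a chain of bounded diameter, and bounded geometry keeps the number of simplices involved uniformly finite. This is the step I expect to be the main obstacle: one must verify that uniform contractibility delivers these fillings with a single radius function working for all simplices simultaneously, so that $H$ has genuinely bounded propagation and is therefore bounded on $\ell^\phi$ by the convexity estimate above. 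Running the symmetric construction on $Y$ gives a chain homotopy equivalence between $(\ell^\phi(X^*),\delta)$ and $(\ell^\phi(Y^*),\delta)$; since all maps are continuous, the induced isomorphisms on $\ell^\phi H^*$ and on the reduced cohomology $\ell^\phi\overline{H}^*$ follow formally, proving part (1).

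For part (2) I would use that $f^*$, $g^*$ and the homotopy $H^\vee$ all have bounded propagation, together with the fact that the boundary homeomorphism $\partial F$ carries cone-topology neighborhoods of $\xi$ in $\overline{X}$ to those of $F(\xi)$ in $\overline{Y}$. Consequently, if a cochain vanishes on a neighborhood of $\xi$, then its image under any of these operators vanishes on a (possibly smaller) neighborhood of the corresponding boundary point: a simplex far out toward $\xi$ only sees, through a bounded-propagation operator, simplices that are themselves far out toward $\xi$, hence still in the region where the cochain is zero. Thus all the operators restrict to the subcomplexes $\ell^\phi(X^*,\xi)$ and $\ell^\phi(Y^*,F(\xi))$, and the restricted maps form a homotopy equivalence, giving the isomorphism of relative cohomologies.
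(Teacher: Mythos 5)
Your proposal is correct and follows essentially the same route as the paper: a chain map $c_F$ on $C_*(X)$ with uniformly bounded length and coefficients, its dual on cochains bounded via convexity/Jensen as in Remark \ref{ObsEqivalenciaOrlicz}, a uniformly controlled prism homotopy, and bounded propagation to preserve the relative subcomplexes near $\xi$. The one step you flag as the main obstacle --- that uniform contractibility and bounded geometry yield fillings and homotopies with uniform bounds --- is exactly the content of Lemmas \ref{lema1} and \ref{lema2}, which the paper does not reprove but imports from Bourdon--Pajot \cite{BP}.
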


When we say \textit{homotopically equivalent} we mean a homotopy equivalence in a continuous sense, that is, all maps involved are continuous. Such a homotopy equivalence implies the isomorphism between the respective cohomology spaces in the sense of topological vector spaces.

The proof of the first part of Theorem \ref{invarianza} is done in \cite{C}. We will prove the second part in Section \ref{sec2} using the same method.\\ 

Now take a Riemannian manifold $M$ and denote by $\Omega^k(M)$ the space of all (smooth) differential $k$-forms on $M$. Consider
$$L^\phi\Omega^k(M)=\{\omega\in\Omega^k(M) : \|\omega\|_{L^\phi},\|d\omega\|_{L^\phi}<+\infty\},$$ 
equipped with the norm $|\omega|_{L^\phi}=\|\omega\|_{L^\phi}+\|d\omega\|_{L^\phi}$. Here $\|\omega\|_{L^\phi}$ is the Luxemburg norm of the function
$$x\mapsto|\omega|_x=\sup\{|\omega_x(v_1,\ldots,v_k)| : v_i\in T_xM \text{ for }i=1,\ldots,k, \text{ with } \|v_i\|_x=1\}$$
in the measure space $(M,dV)$, where $\|\ \|_x$ is the Riemannian norm on $T_xM$ and $dV$ is the Riemannian volume on $M$. We denote by $L^\phi C^k(M)$ the completion of $L^\phi\Omega^k(M)$ with respect to $|\text{ }\text{ }|_{L^\phi}$. Observe that the derivative of differential forms induces a continuous map $d=d_k:L^\phi C^k(M)\to L^\phi C^{k+1}(M)$.

We can consider the \textit{Orlicz-de Rham cohomology} of $M$ associated with $\phi$ (or $L^\phi$\textit{-cohomology} of $M$) as the family of topological vector spaces 
$$L^\phi H^k(M)=\frac{\Ker d_k}{\I d_{k-1}},$$
and the \textit{reduced Orlicz-de Rham cohomology} of $M$ (or \textit{reduced} $L^\phi$\textit{-cohomology} of $M$) as the family of Banach spaces
$$L^\phi \overline{H}^k(M)=\frac{\Ker d_k}{\overline{\I d_{k-1}}}.$$

\begin{remark}\label{ObsDef}
A measurable $k$-form on $M$ is a function $x\mapsto\omega_x$, where $\omega_x$ is an alternating $k$-linear form on the tangent space $T_xM$, such that the coefficients of $\omega$ for every parmetrization of $M$ are all measurable. We consider $L^\phi(M,\Lambda^k)$ the space of $L^\phi$-integrable measurable $k$-forms up to almost everywhere zero forms. It is a Banach space equipped with the Luxemburg norm $\|\ \|_{L^\phi}$. 

Since $L^\phi\Omega^k(M)\subset L^\phi(M,\Lambda^k)$ and the inclusion is continuous, one can prove using Hölder's inequality ($\|fg\|_{L^1}\leq 2\|f\|_{L^\phi}\|g\|_{L^{\phi^*}}$, where $\phi^*$ is the convex conjugate of $\phi$) that $L^\phi C^k(M)$ can be seen as a space of $k$-measurable forms in $L^\phi(M,\Lambda^k)$ which weak derivatives are defined and belong to $L^\phi(M,\Lambda^{k+1})$. 

We say that $\varpi$ is the weak derivative of $\omega\in L^\phi(M,\Lambda^k)$ if for every differential $(n-k)$-form with compact support $\alpha$ one has 
$$\int_M \varpi\wedge\alpha=(-1)^{k-1}\int_M \omega\wedge d\alpha.$$
\end{remark}

\medskip

If $M$ is Gromov-hyperbolic and $\xi\in\partial M$, we consider the subspace $L^\phi C^k(X,\xi)\subset L^\phi C^k(X)$ consisting of all $k$-forms that are zero in almost every point of a neighborhood of $\xi$. Then we can define the \textit{relative Orlicz-de Rham cohomology} of the pair $(M,\xi)$ associated with $\phi$ (or \textit{relative} $L^\phi$\textit{-cohomology} of $(M,\xi)$) as the family of topological vector spaces
$$L^\phi H^k(M,\xi)=\frac{\Ker d|_{L^\phi C^k(X,\xi)}}{\I d|_{L^\phi C^{k-1}(X,\xi)}}.$$

\subsection{Main result}

Consider a Lie group $G$ equipped with a left-invariant Riemannian metric. We denote by $dx$ the volume on $G$ and by $L_x$ and $R_x$ the left and right translation by $x\in G$ respectively.

Suppose that there exists a uniformly contractible simplicial complex $X$ with bounded geometry that is quasi-isometric to $G$. Then we can define the \textit{simplicial Orlicz cohomology}  and the \textit{reduced simplicial Orlicz cohomology} of $G$ as the families of spaces
$$\ell^\phi H^k(G)= \ell^\phi H^k(X)\text{ and }\ell^\phi \overline{H}^k(G)= \ell^\phi \overline{H}^k(X)$$
Observe that, because of Theorem \ref{invarianza}, it is well-defined up to isomorphisms.

If $G$ is Gromov-hyperbolic and $\xi\in\partial G$, we can consider the \textit{relative simplicial Orlicz cohomology} of the pair $(G,\xi)$ as the family of spaces
$$\ell^\phi H^k(G,\xi)= \ell^\phi H^k(X,\overline{\xi}),$$
where $\overline{\xi}$ is the image of $\xi$ by a quasi-isometry $F:G\to X$.

\begin{remark}\label{obssimplicial}
Let $M$ be a complete Riemannian manifold with bounded geometry. This means that it has positive injectivity radius and its sectional curvature is uniformly bounded from above and below. Assume that $\dim(M)=n$.

One can consider $X_M$ a triangulation of $M$ with bounded geometry such that every $n$-simplex is bi-Lipschitz diffeomorphic to the standard Euclidean simplex of the same dimension (see \cite{A}). For every vertex $v$ of $X_M$ we define $U(v)$ as the interior of the union of all simplices containing $v$. Observe that $X_M$ is the nerve of the covering $\mathcal{U}=\{U(v):v\in X_M^0\}$, and that every non empty intersection $U_1\cap\ldots\cap U_k$ of elements of $\mathcal{U}$ is bi-Lipschitz equivalent to the unit ball in $\R^n$ with uniform Lipschitz constant.

In general, we can consider $X_M$ as the nerve of any open covering satisfying the above properties and equip it with a length metric such that every simplex is isometric to the standard Euclidean simplex of the same dimension. It is clear that $X_M$ is quasi-isometric to $M$ in this case. Moreover, there is a family of quasi-isometries $F:X_M\to M$ verifying $F(U)\in U$ for all vertex $U\in\mathcal{U}$, we call them \textit{canonical quasi-ismometries}.

If $M$ is Gromov-hyperbolic and $\xi$ is a point in $\partial M$, observe that all canonical quasi-isometries are at bounded uniform distance from each other and as a consequence they induce the same map on the boundary. Denote by $\overline{\xi}\in X_M$ the point corresponding to $\xi$ by a canonical quasi-isometry. We say that $(X_M,\overline{\xi})$ is a \textit{simplicial pair corresponding} to $(M,\xi)$. As we saw with the first construction, if $M$ is uniformly contractible we can suppose that $X_M$ is also uniformly contractible.

Since a Lie group $G$ equipped with a left-invariant metric is always complete and has bounded geometry, then one can consider the simplicial complex $X_G$. If $G$ is in addition contractible, then it is uniformly contractible and its (relative/reduced) simplicial Orlicz cohomology is well-defined.
\end{remark}

Our main result is the following:

\begin{theorem}\label{main}
Let $G$ be a Lie group equipped with a left-invariant metric and $X_G$ a simplicial complex as in Remark  \ref{obssimplicial}, then
\begin{enumerate}
    \item The (reduced) $L^\phi$-cohomology of $G$ and the (reduced) $\ell^\phi$-cohomology of $X_G$ are isomorphic.
    
    \item If $G$ is Gromov-hyperbolic and $\xi\in\partial G$, the relative $L^\phi$-cohomology of the pair $(G,\xi)$ and the relative $\ell^\phi$-cohomology of the pair $(X_G,\overline{\xi})$ are isomorphic.
\end{enumerate}
\end{theorem}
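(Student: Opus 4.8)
The plan is to realize both sides as the cohomology of a single \v{C}ech--de Rham double complex attached to the good cover $\mathcal{U}=\{U_i\}$ of $G$ whose nerve is $X_G$ (Remark \ref{obssimplicial}). For $\sigma=(i_0,\dots,i_p)\in X_G^p$ write $U_\sigma=U_{i_0}\cap\dots\cap U_{i_p}$, and let $A^{p,q}$ be the $L^\phi C$-completion of the space of families $(\omega_\sigma)_{\sigma\in X_G^p}$, $\omega_\sigma\in\Omega^q(U_\sigma)$, normed by the total $L^\phi C$-norm of the corresponding form on the disjoint union $\coprod_{\sigma\in X_G^p}U_\sigma$. Endow $A^{\bullet,\bullet}$ with the horizontal \v{C}ech coboundary $\delta$ and the vertical de Rham differential $d$; $d$ is bounded by the definition of the $L^\phi C$-norm, and $\delta$ is bounded because the finite overlap of $\mathcal{U}$ (bounded geometry) makes the restriction maps bounded. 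The two augmentations are the restriction $\varepsilon\colon L^\phi C^q(G)\to A^{0,q}$, which is bounded by finite overlap (after passing to an equivalent $K\phi$ via Remark \ref{ObsEqivalenciaOrlicz}), and the inclusion of constant cochains $j\colon\ell^\phi(X_G^p)\to A^{p,0}$, $c\mapsto(\sigma\mapsto c(\sigma)\chi_{U_\sigma})$, which is a bounded isomorphism onto its image with bounded inverse because $\Vol(U_\sigma)$ is comparable to the volume of the unit ball for every $\sigma$. The theorem follows once each augmentation is shown to be a quasi-isomorphism admitting a bounded homotopy inverse.

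First I would prove that the rows are exact onto $\varepsilon\big(L^\phi C^q(G)\big)$. Using bounded geometry, fix a smooth partition of unity $\{\rho_i\}$ subordinate to $\mathcal{U}$ with $0\le\rho_i\le1$, uniformly finite overlap and $\sup_i\|d\rho_i\|_\infty<\infty$. Then $h(\omega)_\tau=\sum_i\rho_i\,\omega_{(i,\tau)}$ is a contracting homotopy of $(A^{\bullet,q},\delta)$, and the convexity of $\phi$ together with the finite overlap and the uniform bounds on $\rho_i$ and $d\rho_i$ (the latter entering through $d(\rho_i\omega)=d\rho_i\wedge\omega+\rho_i\,d\omega$) make $h$ bounded for the total $L^\phi C$-norm, again up to replacing $\phi$ by an equivalent $K\phi$. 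This identifies the horizontal cohomology, with bounded maps, with $L^\phi C^q(G)$ placed in degree $p=0$.

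Next I would prove that the columns are exact onto $j\big(\ell^\phi(X_G^p)\big)$. Each $U_\sigma$ is bi-Lipschitz to the unit ball of $\R^n$ with a constant independent of $\sigma$ (Remark \ref{obssimplicial}); transporting the standard cone homotopy for the Poincaré lemma on the ball produces a contracting homotopy of $(L^\phi C^\bullet(U_\sigma),d)$ onto the constants, whose operator norm is bounded independently of $\sigma$. Assembling these over $\sigma\in X_G^p$ and controlling the resulting sum through the disjoint-union norm yields a bounded contracting homotopy of each column onto $j\big(\ell^\phi(X_G^p)\big)$. This uniform bound is the delicate point, and the one I expect to be the main obstacle: unlike the $L^p$ case one cannot rescale freely inside an Orlicz space, so the bound independent of $\sigma$ must be drawn purely from the uniform bi-Lipschitz geometry and the uniformly bounded size of the $U_\sigma$, once more absorbing multiplicative constants via Remark \ref{ObsEqivalenciaOrlicz}.

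With both augmentations recognized as bounded quasi-isomorphisms, the usual zig-zag argument in the double complex produces the comparison morphism $L^\phi H^k(G)\to\ell^\phi H^k(X_G)$ and a bounded homotopy inverse; since $X_G$ is finite dimensional every zig-zag has length at most $\dim X_G$, so the maps built from $h$ and the cone homotopies stay bounded, the isomorphism holds in the category of topological vector spaces, and it descends to the reduced (Banach) cohomologies. This proves (1). For (2) I would check that each operator used, namely $\varepsilon$, $j$, $\delta$, $d$, the partition of unity $h$ and the cone homotopies, displaces supports only by a bounded amount (the cone homotopy spreads support within the bounded-diameter set $U_\sigma$), so a form vanishing almost everywhere near $\xi$ is carried to data vanishing near $\xi$, and, through the quasi-isometry $F$ with $F(\xi)=\overline{\xi}$, a cochain vanishing near $\overline{\xi}$ is carried to one vanishing near $\overline{\xi}$, up to shrinking the neighborhood by a bounded amount. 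Hence the whole construction restricts to the relative subcomplexes $L^\phi C^\bullet(G,\xi)$ and $\ell^\phi(X_G^\bullet,\overline{\xi})$ and gives the relative isomorphism.
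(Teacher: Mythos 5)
Your overall architecture (a \v{C}ech--de Rham double complex over the cover $\mathcal{U}$, rows contracted by a partition of unity, columns contracted by an Orlicz Poincar\'e lemma on the uniformly bi-Lipschitz pieces, then the standard bicomplex lemma) is the same skeleton the paper uses in Proposition \ref{PropEq1}. The genuinely different choice you make is the norm on $A^{p,q}$: you take the Luxemburg norm on the disjoint union $\coprod_\sigma U_\sigma$, whereas the paper takes the $\ell^\phi$-norm of the sequence $U\mapsto\|\omega_U\|_{L^\phi(U)}$. For $\phi(t)=|t|^p$ these coincide, but for general Young functions they do not, and this is exactly where your argument has a gap. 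With the paper's norm, a uniform \emph{operator-norm} bound on the per-piece cone homotopies immediately bounds the assembled column homotopy (by monotonicity of the Luxemburg norm applied to the sequence of local norms), but the row edge complex then comes out as $\mathcal{L}^\phi\Omega^*(G,\mathcal{U})$, which by the Bourdon example in the paper is strictly larger than $L^\phi\Omega^*(G)$; bridging that gap is the content of Propositions \ref{PropEq2} and \ref{PropEq3}, and it is done by convolution with a kernel --- the only place the Lie group structure is used. With your norm the row edge complex is the global $L^\phi C^q(G)$ as you want, but now a uniform operator-norm bound on the cone homotopies is \emph{not} enough to bound the assembled column homotopy: knowing $\|h_\sigma\omega_\sigma\|_{L^\phi(U_\sigma)}\leq K\|\omega_\sigma\|_{L^\phi(U_\sigma)}$ for each $\sigma$ gives no control of $\sum_\sigma\int_{U_\sigma}\phi\bigl(|h_\sigma\omega_\sigma|/\gamma\bigr)$ in terms of $\sum_\sigma\int_{U_\sigma}\phi\bigl(|\omega_\sigma|/\gamma\bigr)$ at a common level $\gamma$; you need a uniform \emph{modular} inequality, and Remark \ref{ObsEqivalenciaOrlicz}, which only replaces $\phi$ by $K\phi$, cannot manufacture one from an operator-norm bound. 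Since your plan would otherwise prove the theorem for arbitrary bounded-geometry manifolds with no use of the group structure, this is the step you must treat as the crux, not as a constant to be absorbed.

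The second concrete problem is your appeal to ``the standard cone homotopy for the Poincar\'e lemma on the ball.'' The cone operator $\chi_x$ based at a single point $x$ is not bounded on $L^\phi\Omega^k(B)$ (it already fails for $L^p$ with small $p$ because of the singularity at the cone point); the paper's Lemma \ref{alciclicoOrlicz} has to average $\chi_x$ over $x\in\frac{1}{2}B$, reduce the resulting kernel to a Riesz potential $\int_{B(y,2)}|z-y|^{1-n}u(z)\,dz$, and apply Jensen's inequality to obtain precisely the modular estimate $\int_B\phi\bigl(|h(\omega)|_y/C\gamma\bigr)dy\preceq\int_B\phi\bigl(|\omega|_z/\gamma\bigr)dz$ that your disjoint-union norm requires. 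If you carry out that averaged construction and record the modular (not merely operator-norm) form of the bound, your single-double-complex route can likely be pushed through and would be an interesting simplification, since it would eliminate the intermediate complexes $\mathcal{L}^\phi$, $\mathcal{I}^\phi$ and the convolution steps; as written, however, the proposal assumes away the two points where the Orlicz case genuinely differs from the $L^p$ case.
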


As a consequence of the proof of Theorem \ref{main} we will obtain:

\begin{theorem}\label{Kop}
If $G$ is a Lie group equipped with a left-invariant metric, then the cochain complexes $(L^\phi\Omega^k(G),d)$ and $(L^\phi C^k(G),d)$ are homotopically equivalent. The same result is true for the relative complexes in the Gromov-hyperbolic case.
\end{theorem}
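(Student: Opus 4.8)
The plan is to realize the completion as an honest homotopy equivalence by constructing an explicit smoothing operator together with a chain homotopy, both continuous for the graph norm $|\cdot|_{L^\phi}$. Write $\jmath\colon (L^\phi\Omega^*(G),d)\to(L^\phi C^*(G),d)$ for the inclusion, which is a continuous cochain map since $L^\phi C^*(G)$ is by definition the completion of $L^\phi\Omega^*(G)$. I will produce a continuous cochain map $S\colon (L^\phi C^*(G),d)\to(L^\phi\Omega^*(G),d)$ and show $\jmath\circ S\simeq\mathrm{Id}$ and $S\circ\jmath\simeq\mathrm{Id}$. Fix a neighborhood $U$ of the identity $e$ on which $\exp$ is a diffeomorphism, and a nonnegative $\rho\in C_c^\infty(G)$ with $\supp\rho\subset U$ and $\int_G\rho(g)\,dg=1$. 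Define the smoothing operator by averaging right translations,
$$S\omega=\int_G\rho(g)\,R_g^*\omega\,dg.$$
The decisive point is the choice of right (rather than left) translations: in the left-invariant trivialization of $TG$ the differential $DR_g$ equals $\mathrm{Ad}_{g^{-1}}$, which is independent of the base point, so the operator norm of $R_g^*$ on $L^\phi(G,\Lambda^*)$ is controlled by a factor depending only on $\|\mathrm{Ad}_{g^{-1}}\|$ and the modular function $\Delta(g)$, both uniformly bounded for $g\in\supp\rho$. By the integral form of Minkowski's inequality for the Luxembourg norm (absorbing the bounded volume distortion via Remark~\ref{ObsEqivalenciaOrlicz}) the operator $S$ is bounded on $L^\phi$; since $R_g^*$ commutes with $d$ and $\rho$ is smooth, the change of variables $g\mapsto x^{-1}y$ together with differentiation under the integral shows that $S\omega$ is a genuine smooth form with $dS\omega=S\,d\omega$. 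Hence $S$ is a continuous cochain map landing in $L^\phi\Omega^*(G)$.

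For the homotopy, observe that for $g=\exp\xi\in\supp\rho$ the path $t\mapsto R_{\exp t\xi}$ is exactly the flow of the left-invariant vector field $\tilde\xi$, so Cartan's formula integrated in $t$ gives
$$R_g^*\omega-\omega=d\,h_g\omega+h_g\,d\omega,\qquad h_g\omega=\int_0^1 R_{\exp t\xi}^*\,\iota_{\tilde\xi}\omega\;dt.$$
Averaging against $\rho$ and using $\int_G\rho\,dg=1$ yields the homotopy $H=\int_G\rho(g)\,h_g\,dg$ satisfying $S-\mathrm{Id}=dH+Hd$. Here the second virtue of right translations appears: $\tilde\xi$ is left-invariant, so $|\iota_{\tilde\xi}\omega|\le|\xi|\,|\omega|$ with $|\xi|$ bounded on $\supp\rho$, while the $R_{\exp t\xi}^*$ stay uniformly bounded on $L^\phi$ for $t\in[0,1]$. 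The integral Minkowski inequality then bounds $\|H\omega\|_{L^\phi}$ by a constant times $\|\omega\|_{L^\phi}$, and since $dH\omega=S\omega-\omega-H\,d\omega$ one also controls $\|dH\omega\|_{L^\phi}$; thus $H$ lowers the degree by one and is continuous for $|\cdot|_{L^\phi}$. The same estimates show $H$ carries smooth forms to smooth forms, so it restricts to $L^\phi\Omega^*(G)$. Therefore $\jmath S\simeq\mathrm{Id}$ on $L^\phi C^*(G)$ and $S\jmath\simeq\mathrm{Id}$ on $L^\phi\Omega^*(G)$, and $\jmath,S$ are mutually inverse homotopy equivalences.

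For the relative Gromov-hyperbolic case it suffices to check that $S$ and $H$ preserve the subspaces of forms vanishing near $\xi$. Because left translations are isometries, every right translation displaces points by a fixed amount, $d(x,xg)=d(e,g)\le\mathrm{diam}(\supp\rho)=:D$, and the same bound holds for each $R_{\exp t\xi}$ with $t\in[0,1]$. Consequently $S$ and $H$ have propagation at most $D$: if $\omega$ vanishes almost everywhere on a neighborhood $W$ of $\xi$ in $\overline G$, then $S\omega$ and $H\omega$ vanish on the $D$-interior of $W$, which is again a neighborhood of $\xi$ (shrinking a boundary neighborhood by a bounded amount leaves a boundary neighborhood, by the description of the topology of $\overline G$). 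Hence the entire homotopy equivalence restricts to $(L^\phi C^*(G,\xi),d)$ and $(L^\phi\Omega^*(G,\xi),d)$, giving the relative statement.

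The main obstacle is precisely the uniform operator bounds on the Orlicz spaces that make $S$ and $H$ continuous. Everything rests on the two structural facts that single out right translations: $DR_g=\mathrm{Ad}_{g^{-1}}$ has base-point-independent norm, and the generator $\tilde\xi$ of $t\mapsto R_{\exp t\xi}$ is left-invariant with constant norm. With left translations the smoothing would even be an $L^\phi$-isometry, but the velocity field of the homotopy would then be right-invariant with unbounded norm, and no uniform estimate for $H$ would be available; the right-translation choice repairs exactly this, at the harmless cost of a bounded modular-function factor absorbed through Remark~\ref{ObsEqivalenciaOrlicz}. The remaining points—smoothness of $S\omega$, commutation with the weak $d$, and the integral Minkowski inequality for the Luxembourg norm—are routine.
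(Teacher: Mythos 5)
Your proposal is correct and takes essentially the same route as the paper: your smoothing operator $S$ is exactly the paper's convolution $\omega*\kappa=\int_G R_z^*\omega\,\kappa(z)\,dz$, and your homotopy $H$ is the paper's operator $h$ obtained from Cartan's formula along the flows $x\mapsto x\cdot\exp(tZ)$ of left-invariant fields, with the same uniform Orlicz bounds coming from $|d_xR_z|=|d_eR_z|$ and the constant norm of left-invariant fields (Lemma \ref{lemaNormaConvolucion} and the claims in Propositions \ref{PropEq2} and \ref{PropEq3}). The relative case is likewise disposed of by the finite propagation forced by the compactness of $\supp(\kappa)$.
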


A more general version of Theorem \ref{Kop} is proved in \cite{KP}.\\

Theorem \ref{main} implies that if $G$ is contractible and $\phi$ is a Young function, then $\ell^\phi H^k(G)$ is isomorphic to $L^\phi H^k(G)$ and $\ell^\phi \overline{H}^k(G)$ is isomorphic to $L^\phi \overline{H}^k(G)$. If $G$ is in addition Gromov-hyperbolic and $\xi$ is a point in $\partial G$, then $\ell^\phi H^k(G,\xi)$ is isomorphic to $L^\phi H^k(G,\xi)$.

Combining this with Theorem \ref{invarianza} we get:

\begin{corollary}\label{main3}
If $F:G_1\to G_2$ is a quasi-isometry between two contractible Lie groups equipped with left-invariant metrics and $\phi$ is a Young function, then for every $k\in\N$
\begin{itemize}
    \item the topological vector spaces $L^\phi H^k(G_1)$ and $L^\phi H^k(G_2)$ are isomorphic; and
    \item the Banach spaces $L^\phi \overline{H}^k(G_1)$ and $L^\phi \overline{H}^k(G_2)$ are isomorphic.
\end{itemize}
Furthermore, if $G_1$ and $G_2$ are Gromov-hyperbolic and $\xi$ is a point in $\partial G_1$, then the spaces $L^\phi H^k(G_1,\xi)$ and $L^\phi H^k(G_2,F(\xi))$ are isomorphic for every $k$.
\end{corollary}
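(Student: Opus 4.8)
The plan is to chain together the two ingredients already established: the de Rham-type equivalence of Theorem \ref{main}, which passes from the manifold $G_i$ to its simplicial model $X_{G_i}$, and the quasi-isometry invariance of Theorem \ref{invarianza}, which compares two such simplicial models. The absolute case is then a pure diagram chase; the only genuine bookkeeping is the identification of boundary points in the relative case.

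First I would fix the simplicial data. Since each $G_i$ is a contractible Lie group with a left-invariant metric, by Remark \ref{obssimplicial} it is complete, has bounded geometry, and is uniformly contractible, so one can build a uniformly contractible simplicial complex $X_{G_i}$ of bounded geometry together with a canonical quasi-isometry $\Phi_i:G_i\to X_{G_i}$. By Theorem \ref{main}(1), for each $i\in\{1,2\}$ and every $k$ there are isomorphisms
$$L^\phi H^k(G_i)\cong\ell^\phi H^k(X_{G_i}),\qquad L^\phi\overline{H}^k(G_i)\cong\ell^\phi\overline{H}^k(X_{G_i}).$$

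Next I would produce a quasi-isometry between the two simplicial models. Writing $\overline{\Phi_1}$ for a quasi-inverse of $\Phi_1$, the composition $\Psi=\Phi_2\circ F\circ\overline{\Phi_1}:X_{G_1}\to X_{G_2}$ is again a quasi-isometry. Since $X_{G_1}$ and $X_{G_2}$ are uniformly contractible simplicial complexes of bounded geometry, Theorem \ref{invarianza}(1) applies to $\Psi$ and yields $\ell^\phi H^k(X_{G_1})\cong\ell^\phi H^k(X_{G_2})$ and $\ell^\phi\overline{H}^k(X_{G_1})\cong\ell^\phi\overline{H}^k(X_{G_2})$. Composing with the isomorphisms of the previous step gives the two required isomorphisms $L^\phi H^k(G_1)\cong L^\phi H^k(G_2)$ and $L^\phi\overline{H}^k(G_1)\cong L^\phi\overline{H}^k(G_2)$.

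For the relative statement the scheme is identical once the boundary points are matched, and this matching is the step I expect to require the most care. Assume $G_1,G_2$ are Gromov-hyperbolic and $\xi\in\partial G_1$, so $\partial F(\xi)=F(\xi)\in\partial G_2$. By Remark \ref{obssimplicial} the canonical quasi-isometries induce well-defined boundary maps, and $\overline{\xi}=\partial\Phi_1(\xi)$, $\overline{F(\xi)}=\partial\Phi_2(F(\xi))$ are the marked points of the simplicial pairs corresponding to $(G_1,\xi)$ and $(G_2,F(\xi))$. The key point is that the induced boundary map of $\Psi$ carries $\overline{\xi}$ to $\overline{F(\xi)}$: since boundary maps are functorial under composition of quasi-isometries and $\partial\overline{\Phi_1}=(\partial\Phi_1)^{-1}$, one has
$$\partial\Psi(\overline{\xi})=\partial\Phi_2\bigl(\partial F(\partial\overline{\Phi_1}(\overline{\xi}))\bigr)=\partial\Phi_2\bigl(\partial F(\xi)\bigr)=\partial\Phi_2(F(\xi))=\overline{F(\xi)}.$$
Hence Theorem \ref{invarianza}(2), applied to $\Psi$ with marked point $\overline{\xi}$, gives $\ell^\phi H^k(X_{G_1},\overline{\xi})\cong\ell^\phi H^k(X_{G_2},\overline{F(\xi)})$, and combining this with the relative isomorphisms of Theorem \ref{main}(2) yields $L^\phi H^k(G_1,\xi)\cong L^\phi H^k(G_2,F(\xi))$. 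The only subtlety is precisely the compatibility of the several boundary maps displayed above; everything else is a formal composition of isomorphisms.
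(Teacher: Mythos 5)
Your proposal is correct and follows exactly the route the paper intends: the paper gives no separate proof of this corollary, simply asserting that it follows by combining Theorem \ref{main} (de Rham equivalence with the simplicial model $X_{G_i}$) with Theorem \ref{invarianza} (quasi-isometry invariance of the simplicial Orlicz cohomology), which is precisely your chain of isomorphisms via $\Psi=\Phi_2\circ F\circ\overline{\Phi_1}$. Your careful tracking of the boundary points in the relative case is a welcome elaboration of a step the paper leaves implicit, but it is the same argument.
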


\section{Invariance of the simplicial relative case}\label{sec2}

Let $X$ be a simplicial complex with bounded geometry and fix a Young function $\phi$. Observe that every element $\theta\in \ell^\phi(X^k)$ has a natural linear extension $\theta:C_k(X)\to\R$, where
$$C_k(X)=\left\{\sum_{i=1}^m t_i\sigma_i :  t_1,\ldots,t_m\in\R, \sigma_1,\ldots,\sigma_m\in X^k\right\}.$$ 
The \textit{support} of a chain $c=\sum_{i=1}^m t_i\sigma_i$ in $C_k(X)$ (with $t_i\neq 0$ for all $i=1,\ldots,m$) is $|c|=\{\sigma_1,\ldots,\sigma_m\}$. We also define the \textit{uniform norm} and the \textit{length} of $c$ by
$$\|c\|_\infty=\max\{|t_1|,\ldots,|t_m|\}\text{, and } \ell(c)=m.$$

\begin{proposition}\label{ContinuidadBordeOrlicz}
The usual coboundary operator $\delta=\delta_k:\ell^\phi (X^k)\to\ell^\phi (X^{k+1})$ is continuous.
\end{proposition}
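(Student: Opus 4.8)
The plan is to exploit that $\delta$ is a \emph{linear} map between the Banach spaces $\ell^\phi(X^k)$ and $\ell^\phi(X^{k+1})$, so that continuity is equivalent to boundedness; I will establish a modular estimate yielding a bound of the form $\|\delta\theta\|_{\ell^\phi}\leq \lambda\,\|\theta\|_{\ell^\phi}$ with a constant $\lambda$ depending only on $k$ and on the bounded geometry data $C$ and $N$.

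First I would record the two combinatorial inputs coming from bounded geometry. Each $(k+1)$-simplex $\sigma$ has exactly $D=k+2$ codimension-one faces, so $\delta\theta(\sigma)=\theta(\partial\sigma)=\sum_{j}(-1)^j\theta(\tau_j)$ is a signed sum of $D$ values of $\theta$ on $k$-simplices. Conversely, there is a uniform bound $M$ on the number of $(k+1)$-simplices admitting a fixed $k$-simplex $\tau$ as a face: any such $\sigma$ has diameter at most $C$, hence is contained in the ball $B(v,2C)$ centered at any fixed vertex $v$ of $\tau$, so their number is at most $N(2C)$. Set $M=N(2C)$.

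Next I would prove the modular inequality. Fix $\theta$ and a scale $\gamma>0$ with $\sum_{\tau\in X^k}\phi(\theta(\tau)/\gamma)\leq 1$; replacing $\theta$ by $\theta/\gamma$ I may assume $\gamma=1$. Put $s=\max(1,M/D)$ and $\lambda=Ds=\max(D,M)$. Using that $\phi$ is even and convex with $\phi(0)=0$, Jensen's inequality applied to the $D$ faces gives, for each $\sigma$,
$$\phi\!\left(\frac{\delta\theta(\sigma)}{\lambda}\right)=\phi\!\left(\frac{1}{D}\sum_{j}\frac{(-1)^j\theta(\tau_j)}{s}\right)\leq\frac{1}{D}\sum_{j}\phi\!\left(\frac{\theta(\tau_j)}{s}\right)\leq\frac{1}{Ds}\sum_{j}\phi(\theta(\tau_j)),$$
where the last step uses $\phi(x/s)\leq\phi(x)/s$ for $s\geq 1$ (again from convexity and $\phi(0)=0$). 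Summing over all $\sigma\in X^{k+1}$ and interchanging the order of summation, each term $\phi(\theta(\tau))$ appears with multiplicity at most $M$, so
$$\sum_{\sigma\in X^{k+1}}\phi\!\left(\frac{\delta\theta(\sigma)}{\lambda}\right)\leq\frac{1}{Ds}\sum_{\sigma}\sum_{j}\phi(\theta(\tau_j))\leq\frac{M}{Ds}\sum_{\tau\in X^k}\phi(\theta(\tau))\leq\frac{M}{Ds}\leq 1.$$
By definition of the Luxembourg norm this gives $\|\delta\theta\|_{\ell^\phi}\leq\lambda$; undoing the normalization yields $\|\delta\theta\|_{\ell^\phi}\leq\lambda\,\|\theta\|_{\ell^\phi}$, which proves boundedness and hence continuity.

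The step I expect to require the most care is the choice of the scale $s$ reconciling the two independent sources of loss: the factor $D$ from averaging over the faces of a single simplex, and the overlap multiplicity $M$ arising because one $k$-simplex is a face of several $(k+1)$-simplices. The scaling property $\phi(x/s)\leq\phi(x)/s$ is precisely what lets these two effects combine into a single constant $\lambda=\max(D,M)$. The only other point needing attention is the normalization step, i.e.\ justifying that one may work with any $\gamma>\|\theta\|_{\ell^\phi}$ and pass to the infimum, so that the final estimate holds with the genuine norm of $\theta$.
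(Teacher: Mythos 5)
Your proof is correct and follows essentially the same route as the paper: a modular estimate combining the convexity of $\phi$ with the bounded-geometry count of how many $(k+1)$-simplices share a given $k$-face, yielding $\|\delta\theta\|_{\ell^\phi}\preceq\|\theta\|_{\ell^\phi}$. If anything, your version is slightly more careful — you make explicit the Jensen step needed to control $\phi$ of the signed sum $\theta(\partial\sigma)=\sum_j(-1)^j\theta(\tau_j)$ and extract an explicit constant $\lambda=\max(k+2,M)$, whereas the paper passes directly to the bound $\|\theta\|_{\ell^{N(1)\phi}}$ and then invokes the equivalence of the norms $\|\cdot\|_{\ell^{K\phi}}$ and $\|\cdot\|_{\ell^\phi}$ from its Remark 1.1.
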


\begin{proof}
Let $\theta$ be a cochain in $\ell^\phi (X^k)$, then
\begin{align*}
    \|\delta\theta\|_{\ell^\phi} 
                            &=\inf\left\{\gamma>0 : \sum_{\sigma\in X_{k+1}} \phi\left(\frac{\theta(\partial\sigma)}{\gamma}\right)\leq 1\right\}.
\end{align*}
The bounded geometry implies that there is a constant $N(1)$ such that every $k$-simplex $\tau$ in $X$ is on the boundary of at most $N(1)$ $(k+1)$-simplices. Then 
$$\sum_{\sigma\in X_{k+1}} \phi\left(\frac{\theta(\partial\sigma)}{\gamma}\right)\leq \sum_{\tau\in X_k} N(1) \phi\left(\frac{\theta(\tau)}{\gamma}\right),$$
which implies
$$\|\delta\theta\|_{\ell^\phi}\leq \inf\left\{\gamma>0 : \sum_{\tau\in X_k} N(1) \phi\left(\frac{\theta(\tau)}{\gamma}\right)\leq 1\right\}= \|\theta\|_{\ell^{N(1)\phi}}.$$
The proof ends using the equivalence between $\|\text{ }\|_{\ell^{N(1)\phi}}$ and $\|\text{ }\|_{\ell^\phi}$ (Remark \ref{ObsEqivalenciaOrlicz}).
\end{proof}

To prove Theorem \ref{invarianza} we need the following lemmas.

\begin{lemma}[\cite{BP}]\label{lema1} 
Let $X$ and $Y$ be two uniformly contractible simplicial complexes with bounded geometry. Then any quasi-isometry $F:X\to Y$ induces a family of maps $c_F:C_k(X)\to C_k(Y)$ which verify:
\begin{enumerate}
\item[(i)] $\partial c_F(\sigma)=c_F(\partial\sigma)$ for every $\sigma\in X^k$.
\item[(ii)] For every $k\in\N$ there exist constants $N_k$ and $L_k$ (depending only on $k$ and the geometric data of $X,Y$ and $F$) such that 
$\|c_F(\sigma)\|_\infty\leq N_k$ and $\ell(c_F(\sigma))\leq L_k$ for every $\sigma\in X^k$
\end{enumerate}
Furthermore, the Hausdorff distance between $c_F(\sigma)$ and $F(\sigma)$ is uniformly bounded. 
\end{lemma}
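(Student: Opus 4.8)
The plan is to construct the maps $c_F$ by induction on the degree $k$, building them skeleton by skeleton and using uniform contractibility of $Y$ to fill in each new dimension. For the base case $k=0$ and a vertex $\sigma\in X^0$, the point $F(\sigma)\in Y$ lies within bounded distance of some vertex of $Y$, since the bounded geometry of $Y$ forces its vertices to be coarsely dense; I fix such a vertex and set $c_F(\sigma)$ equal to it. Then $\|c_F(\sigma)\|_\infty=\ell(c_F(\sigma))=1$, so one may take $N_0=L_0=1$, and the distance from $|c_F(\sigma)|$ to $F(\sigma)$ is bounded by a constant depending only on the geometric data, establishing the base case of the Hausdorff bound.

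For the inductive step, assume $c_F$ has been defined on $C_{k-1}(X)$ so that (i) and (ii) hold in degree $k-1$ and $|c_F(\tau)|$ is within bounded Hausdorff distance of $F(\tau)$ for every $(k-1)$-simplex $\tau$. Given $\sigma\in X^k$, set $z=c_F(\partial\sigma)$. By the chain-map property in degree $k-1$ one has $\partial z=c_F(\partial\partial\sigma)=0$, so $z$ is a $(k-1)$-cycle. Since $\sigma$ has $k+1$ faces, each with $\ell\le L_{k-1}$ and $\|\cdot\|_\infty\le N_{k-1}$, it follows that $\ell(z)\le (k+1)L_{k-1}$ and $\|z\|_\infty\le(k+1)N_{k-1}$. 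Moreover $\operatorname{diam}(\sigma)\le C$, so $F(\sigma)$ has diameter at most $\lambda C+\epsilon$, and the inductive Hausdorff bound confines $|z|$ to a ball $B(y_\sigma,D)$ whose radius $D$ and whose center $y_\sigma$ (within bounded distance of $F(\sigma)$) are controlled independently of $\sigma$. It then suffices to choose $c_F(\sigma)$ to be a $k$-chain with $\partial c_F(\sigma)=z$ whose support, length and sup-norm are controlled solely in terms of $D$ and $k$; extending linearly over $C_k(X)$ produces (i) automatically together with the bounds (ii).

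The heart of the argument is therefore a uniform filling statement: there exist constants $R,P,Q$, depending only on $D$, $k$ and the geometric data of $Y$, such that every $(k-1)$-cycle $z$ supported in a ball $B(y,D)$ bounds a $k$-chain $w$ with $\partial w=z$, $|w|\subset B(y,R)$, $\ell(w)\le P$, and $\|w\|_\infty\le Q\|z\|_\infty$. I would prove this as follows. Uniform contractibility yields $R=\psi(D)$ so that $B(y,D)$ contracts inside $B(y,R)$; hence $z$ is null-homologous there and bounds a chain supported in the finite subcomplex $B(y,R)$ (for $k=1$ this is just the connectivity of $Y$, used to join the two vertices of $z$ by a bounded-length path). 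Bounded geometry bounds the number of simplices of $B(y,R)$ by $N(R)$, which immediately gives $\ell(w)\le N(R)=:P$ and confines $|w|$ to $B(y,R)$. For the norm, solving $\partial w=z$ inside $B(y,R)$ amounts to a linear system $A_\sigma w=z$ whose matrix $A_\sigma$ has entries in $\{-1,0,1\}$ (the incidence numbers) and size at most $N(R)\times N(R)$; restricting to a maximal invertible integer subsystem and applying Cramer's rule expresses the entries of $w$ as ratios of determinants of matrices with entries in $\{-1,0,1\}$ of size $\le N(R)$, and so bounds $\|w\|_\infty$ by a constant $Q$ depending only on $N(R)$, times $\|z\|_\infty$. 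The uniformity over $\sigma$ is crucial and comes precisely from the fact that, once their size and entries are bounded, the matrices $A_\sigma$ range over a finite set of possibilities.

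Putting this together, I set $L_k=P$ and $N_k=Q(k+1)N_{k-1}$; these are finite and depend only on $k$ and the geometric data, which gives (ii). The chain-map identity (i) holds by construction and linearity, since $\partial c_F(\sigma)=z=c_F(\partial\sigma)$ on each simplex. Finally, because $|c_F(\sigma)|\subset B(y_\sigma,R)$ with $y_\sigma$ within bounded distance of $F(\sigma)$, the Hausdorff distance between $|c_F(\sigma)|$ and $F(\sigma)$ is at most $R+\lambda C+\epsilon$, uniformly in $\sigma$, which closes the induction. The main obstacle is the uniform filling lemma of the previous paragraph, and within it the uniform control of the sup-norm; the decisive point is that bounded geometry reduces the filling to inverting integer incidence matrices of bounded size, for which elementary linear algebra supplies a bound independent of the particular simplex.
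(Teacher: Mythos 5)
The paper does not prove this lemma itself --- it is quoted from \cite{BP} --- and your skeleton-by-skeleton induction (nearby vertices in degree $0$, then uniform contractibility to fill the cycle $c_F(\partial\sigma)$ inside a ball of controlled radius, with bounded geometry and the Cramer's-rule bound on integer incidence matrices giving uniform control of length and sup-norm) is essentially the standard argument behind the cited result. The only steps you gloss over are routine: passing from the topological null-homotopy of $B(y,D)\hookrightarrow B(y,\psi(D))$ to a \emph{simplicial} filling supported in a slightly larger subcomplex (via the comparison of singular and simplicial homology of the finite subcomplex), and the degenerate case $c_F(\sigma)=0$ in the Hausdorff-distance claim.
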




\begin{lemma}[\cite{BP}]\label{lema2}
Consider $F,G:X\to Y$ two quasi-isometries between uniformly contractible simplicial complexes with bounded geometry. If $F$ and $G$ are at bounded uniform distance, then there exists an homotopy $h:C_k(X)\to C_{k+1}(Y)$ between $c_F$ and $c_G$. This means that
\begin{enumerate}
\item[(i)] $\partial h(v)=c_F(v)-c_G(v)$ if $v\in X^0$, and
\item[(ii)] $\partial h(\sigma)+h(\partial\sigma)=c_F(\sigma)-c_G(\sigma)$ if $\sigma\in X^k$, $k\geq 1$.
\end{enumerate}
Moreover, $\|h(\sigma)\|_\infty$ and $\ell(h(\sigma))$ are uniformly bounded by constants $N'_k$ and $L'_k$ that depend only on $k$ and the geometric data of $X,Y,F$ and $G$.
\end{lemma}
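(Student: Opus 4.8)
The plan is to construct $h$ by induction on the dimension of the simplices of $X$, mirroring the inductive construction of the chain map $c_F$ in Lemma \ref{lema1} but raised one dimension. The essential geometric input is a \emph{controlled filling} principle for $Y$: because $Y$ is uniformly contractible with bounded geometry, there exist a function $\rho:[0,+\infty)\to[0,+\infty)$ and, for each $j$, constants $P_j,Q_j$ such that every $j$-cycle $z\in C_j(Y)$ whose support lies in a ball of radius $r$ satisfies $z=\partial w$ for some $w\in C_{j+1}(Y)$ with $|w|$ contained in the ball of radius $\rho(r)$ and with $\|w\|_\infty\leq P_j$, $\ell(w)\leq Q_j$. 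Uniform contractibility provides a filling inside the ball $B(x,\psi(r))$, while the bound $N(\psi(r))$ on the number of simplices in that ball forces the filling to have uniformly bounded norm and length. This is exactly the mechanism already used to produce $c_F$, so I would either invoke it directly or re-run it one dimension higher.

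For the base case I would set, for each vertex $v\in X^0$, $h(v)$ to be an edge path (a $1$-chain) joining $c_G(v)$ to $c_F(v)$, so that $\partial h(v)=c_F(v)-c_G(v)$. Such a path of uniformly bounded length exists because $c_F(v)$ and $c_G(v)$ lie at uniformly bounded distance in $Y$: the hypothesis that $F$ and $G$ are at bounded uniform distance, together with the Hausdorff estimate of Lemma \ref{lema1} relating $c_F(v)$ to $F(v)$ and $c_G(v)$ to $G(v)$, confines both vertices to a common ball of radius independent of $v$.

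For the inductive step, assume $h$ has been defined on all simplices of dimension $<k$ with uniform bounds and satisfying the homotopy identities, and let $\sigma\in X^k$. I would set
$$z=c_F(\sigma)-c_G(\sigma)-h(\partial\sigma),$$
and first check that $z$ is a $k$-cycle: using $\partial c_F(\sigma)=c_F(\partial\sigma)$ and $\partial c_G(\sigma)=c_G(\partial\sigma)$ from Lemma \ref{lema1}(i), and applying the inductive homotopy identity to the chain $\partial\sigma$ (recalling $\partial\partial\sigma=0$), one obtains $\partial h(\partial\sigma)=c_F(\partial\sigma)-c_G(\partial\sigma)$, whence $\partial z=0$. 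Next, since $\sigma$ has bounded diameter and $F,G$ are quasi-isometries at bounded distance, the supports of $c_F(\sigma)$, $c_G(\sigma)$ and $h(\partial\sigma)$ all lie in a common ball whose radius depends only on $k$ and the geometric data; hence so does $|z|$. Applying the controlled filling to $z$ yields $w\in C_{k+1}(Y)$ with $\partial w=z$ and uniformly bounded norm and length, and I would declare $h(\sigma)=w$. By construction $\partial h(\sigma)+h(\partial\sigma)=c_F(\sigma)-c_G(\sigma)$, which is identity (ii), and the resulting bounds, which I denote $N'_k$ and $L'_k$, depend only on $k$ and the data.

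The main obstacle is the quantitative bookkeeping in the last step: one must verify that at every stage the cycle $z$ to be filled is trapped in a ball of radius bounded independently of the particular simplex $\sigma$, and that the filling can consequently be chosen with norm and length bounded uniformly. This is where both hypotheses are indispensable — uniform contractibility to guarantee that a filling exists inside a controlled ball, and bounded geometry to cap the number of simplices in that ball and thereby the size of the filling. The hypothesis that $F$ and $G$ are at bounded uniform distance enters precisely to keep the diameter of $|c_F(\sigma)|\cup|c_G(\sigma)|\cup|h(\partial\sigma)|$ under control; without it the ball containing $z$ would grow with $\sigma$ and no uniform bound could be extracted.
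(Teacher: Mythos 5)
Your construction is correct and is essentially the standard argument behind the cited Lemma (the paper itself gives no proof, deferring to \cite{BP}): an inductive definition of $h$ by filling the cycle $c_F(\sigma)-c_G(\sigma)-h(\partial\sigma)$ inside a uniformly bounded ball, with uniform contractibility supplying the filling and bounded geometry capping its size. The only point to make explicit is that the inductive hypothesis must carry not just the bounds on $\|h(\tau)\|_\infty$ and $\ell(h(\tau))$ but also that $|h(\tau)|$ lies within uniformly bounded distance of $F(\tau)$ --- you raise this in your closing paragraph, and it is exactly what makes the support estimate on $z$ close up.
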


\medskip

\begin{proof}[Proof of Theorem \ref{invarianza} (part 2)]

We define the pull-back of a cochain $\theta\in\ell^\phi(Y^k,F(\xi))$ as the composition $F^*\theta=\theta\circ c_F$. The map $c_F$ given by Lemma \ref{lema1} is not unique, then $F^*$ depends on the choice of it. 

Let us prove that $F^*:\ell^\phi(Y^k,F(\xi))\to \ell^\phi(X^k,\xi)$ is well-defined and continuous:
\begin{align*}
\|F^*\theta\|_{\ell^\phi}
    & =\inf\left\{\gamma>0 : \sum_{\sigma\in X^k}\phi\left(\frac{\theta(c_F(\sigma))}{\gamma}\right)\leq 1\right\}\\
    & \leq\inf\left\{\gamma>0 : \sum_{\sigma\in X^k}\phi\left(\frac{N_k}{\gamma}\sum_{\tau\in |c_F(\theta)|}|\theta(\tau)|\right)\leq 1\right\}\\
    & \leq\inf\left\{\gamma>0 : \sum_{\sigma\in X^k}\sum_{\tau\in |c_F(\theta)|}\frac{1}{\ell(c_F(\sigma))}\phi\left(\frac{N_kL_k}{\gamma}|\theta(\tau)|\right)\leq 1\right\},
\end{align*}
where $N_k$ and $L_k$ are the constants given by Lemma \ref{lema1}.

Since $F$ is a quasi-isometry and the Hausdorff distance between $c_F(v)$ and $F(v)$ is uniformly bounded for all $v\in X^0$, we can find a constant $C_k$ such that if $dist(\sigma_1,\sigma_2)>C_k$, then $c_F(\sigma_1)\cap c_F(\sigma_2)=\emptyset$. Using the bounded geometry of $X$ we have that every $\tau\in Y^k$ satisfies $\tau\in|c_F(\sigma)|$ for at most $D=N(C+C_k)$ simplices $\sigma\in X^k$. This implies
\begin{align*}\label{contiuidad}
\|F^*\theta\|_{\ell^\phi}
    & \leq\inf\left\{\gamma>0 : \sum_{\sigma\in Y^k}D\phi\left(\frac{N_kL_k}{\gamma}|\theta(\tau)|\right)\leq 1\right\}=N_kL_k\|\theta\|_{\ell^{C\phi}}\preceq\|\theta\|_{\ell^\phi}.
\end{align*}
Hence $F^*\theta\in\ell^\phi (X^k)$. We write $f\preceq g$ for a pair of non-negative functions $f$ and $g$ if there exists a constant $K$ such that $f\leq Kg$. 

Now we prove that for every $\theta$ in $\ell^\phi(Y^k,{F(\xi)})$, the cochain $F^*\theta$ is zero on some neighborhood of $\xi$. Assume that $\theta$ is zero on $V\subset \overline{Y}$, $F(\xi)\in V$. If $\sigma\in X^k$ and $v\in X^0$ is a vertex of $\sigma$,
\begin{equation}\label{distHausdorff}
d_H(c_F(\sigma),F(v))\leq d_H(c_F(\sigma),c_F(v))+d_H(c_F(v),F(v)),    
\end{equation}
where $d_H$ denotes the Hausdorff distance. By the properties of $c_F$ the distance (\ref{distHausdorff}) is uniformly bounded by a constant $\tilde{C}_k$. We define $\tilde{V}=\{y\in Y: dist(y,V^c\cap Y)>\tilde{C}_k\}$. Since $F$ is a quasi-isometry, there exists $U\subset\overline{X}$ a neighbourhood of $\xi$ such that $F(U\cap X)\subset \tilde{V}$. For every $k$-simplex $\sigma\subset U$, we have $c_F(\sigma)\subset V$ and then $F^*\theta(\sigma)=0$. We conclude that $F^*\theta$ vanishes on $U$.

Since $c_F$ commutes with the boundary, we have $\delta F^*=F^*\delta$, which implies that $F^*$ defines a continuous map in cohomology, denoted by $F^{\#}:\ell^\phi H^k(Y,F(\xi))\to \ell^\phi H^k(X,\xi)$. We have to prove that $F^{\#}$ is an isomorphism.\\

\underline{Claim:} If $F,G:X\to Y$ are two quasi-isometries at bounded uniform distance, then $F^{\#}=G^{\#}$.\\

We have to construct a family of continuous linear maps $H_k:\ell^\phi(Y^k,F(\xi))\to \ell^\phi(X^{k-1},\xi)$ such that:
\begin{enumerate}
    \item[(i)] $F^*\theta-G^*\theta=H_1\delta\theta$ for every $\theta\in\ell^\phi(Y^0,F(\xi))$, and
    \item $F^*\theta-G^*\theta=H_{k+1}\delta\theta+\delta H_k\theta$ for every $\theta\in\ell^\phi(Y^k,F(\xi))$, $k\geq 1$.
\end{enumerate}

We define $H_k\theta:X^k\to\R$, $H_k\theta(\sigma)=\theta(h(\sigma))$, where $h$ is the map given by Lemma \ref{lema2}. Using the same argument as for $F^*$, one can show that $H_k$ is well-defined and continuous from $\ell^\phi(Y^k,F(\xi))$ to $\ell^\phi(X^k)$. To see that $H_k\theta$ vanishes on some neighborhood of $\xi$ observe that $h(\sigma)$ have uniformly bounded length, which implies that $d_H(c_F(\sigma),h(\sigma))$ is uniformly bounded.

Using the definition of $H_k$ one can easily verify $(i)$ and $(ii)$, which proves the claim.\\

As a consequence of the claim we have that $F^{\#}$ does not depend on the choice of $c_F$. Moreover, if $T:Y\to Z$ is another quasi-isometry, a possible choice of the function $c_{T\circ F}$ is the composition $c_T\circ c_F$. In this case $(T\circ F)^*=F^*\circ T^*$ and as a consequence $(T\circ F)^{\#}=F^{\#}\circ T^{\#}$.

Finally, if $\overline{F}:Y\to X$ is a quasi-inverse of $F$, then by the claim $(F\circ \overline{F})^{\#}$ and $(\overline{F}\circ F)^{\#}$ are the identity in cohomology. Since $(F\circ \overline{F})^{\#}= \overline{F}^{\#}\circ F^{\#}$ and $(\overline{F}\circ F)^{\#}=F^{\#}\circ\overline{F}^{\#}$, the statement follows.
\end{proof}

\section{Integration and convolution of forms}\label{Cap4Sec2S1}

Suppose that $M$ is a smooth manifold of dimension $n$ and $(Z,\mu)$ is a measure space. We say that  $\Phi=\{\Phi_{(x,z)}:x\in M, z\in Z\}$ is a \textit{family of measurable $k$-forms on} $M$ if for every $(x,z)\in M\times Z$, $\Phi_{(x,z)}$ is an alternating $k$-form on the tangent space $T_x M$ and all coefficients of $\Phi$ with respect to every parametrization (depending on $x\in M$ and $z\in Z$) are measurable. It is a \textit{smooth family of $k$-forms} if its coefficients are smooth.

We say that $\Phi$ is \textit{integrable on} $Z$ if for every $x\in M$, the function
$$z\mapsto |\Phi|_{(x,z)}=\sup\{|\Phi_{(x,z)}(v_1,\ldots,v_k)| : v_i\in T_xM\text{ for }i=1,\ldots,k,\text{ with }\|v_i\|_x=1\}$$
belongs to $L^1(Z,\mu)$. In this case we can consider the $k$-form
\begin{equation}\label{formaIntegral}
\omega_x(v_1,\ldots,v_k)=\left(\int_{Z}\Phi_{(x,z)}d\mu(z)\right)(v_1,\ldots,v_k)=\int_{Z}\Phi_{(x,z)}(v_1,\ldots,v_k)d\mu(z).    
\end{equation}
Observe that for all $x\in M$,
$$|\omega|_x\leq \int_Z |\Phi|_{(x,z)}d\mu(z)=\|\Phi_{(x,\cdot)}\|_{L^1}.$$

\begin{lemma}\label{lemaDerivadaDebil}
Let $\{\Phi_{(x,z)}:x\in M,z\in Z\}$ be a measurable family of $k$-forms such that:
\begin{itemize}
\item It is integrable on $Z$, then we can define $\omega$ as in (\ref{formaIntegral}).
\item For every fixed $z\in Z$ the $k$-form $x\mapsto \Phi_{(x,z)}$ is locally integrable and has weak derivative $d\Phi_{(x,z)}$.
\item The function $z\mapsto |d\Phi|_{(x,z)}$ belongs to $L^1(Z,\mu)$ for every $x\in M$.
\end{itemize}
Then $\omega$ is locally integrable and has weak derivative 
\begin{equation}\label{derivadaintegral}
d\omega_x=\int_Z d\Phi_{(x,z)}d\mu(z).
\end{equation}
\end{lemma}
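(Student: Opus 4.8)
The plan is to verify the defining identity of the weak derivative from Remark~\ref{ObsDef} directly, reducing it to the slice-wise identity for each fixed $z\in Z$ and then interchanging the two integrations. Write $\varpi_x=\int_Z d\Phi_{(x,z)}\,d\mu(z)$, which is a well-defined measurable $(k+1)$-form by the third hypothesis. Fixing a smooth, compactly supported test form $\alpha$ of complementary degree and setting $K=\supp\alpha$, I must show
$$\int_M\varpi\wedge\alpha=(-1)^{k-1}\int_M\omega\wedge d\alpha .$$

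First I would dispose of the local integrability claims. From the inequality displayed just before the statement one has the pointwise bounds $|\omega|_x\le\|\Phi_{(x,\cdot)}\|_{L^1}$ and $|\varpi|_x\le\|\,|d\Phi|_{(x,\cdot)}\,\|_{L^1}$. Integrating over the compact set $K$ and applying Tonelli's theorem to the nonnegative, jointly measurable integrands $|\Phi|_{(x,z)}$ and $|d\Phi|_{(x,z)}$ gives
$$\int_K|\omega|_x\,dV\le\int_Z\!\int_K|\Phi|_{(x,z)}\,dV\,d\mu ,\qquad \int_K|\varpi|_x\,dV\le\int_Z\!\int_K|d\Phi|_{(x,z)}\,dV\,d\mu ,$$
so that local integrability of $\omega$ and $\varpi$ follows from the joint integrability of $(x,z)\mapsto|\Phi|_{(x,z)}$ and $(x,z)\mapsto|d\Phi|_{(x,z)}$ on $K\times Z$; this same joint integrability is exactly what licenses the Fubini exchanges below.

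The core computation then runs as follows. Since wedging with the fixed covector $(d\alpha)_x$ is linear, I may pull the $Z$-integral outside the wedge to get $(\omega\wedge d\alpha)_x=\int_Z\Phi_{(x,z)}\wedge(d\alpha)_x\,d\mu(z)$; integrating over $M$ and applying Fubini yields
$$\int_M\omega\wedge d\alpha=\int_Z\Big(\int_M\Phi_{(\cdot,z)}\wedge d\alpha\Big)\,d\mu(z).$$
For each fixed $z$ the form $\Phi_{(\cdot,z)}$ has weak derivative $d\Phi_{(\cdot,z)}$, so Remark~\ref{ObsDef} gives $\int_M\Phi_{(\cdot,z)}\wedge d\alpha=(-1)^{k-1}\int_M d\Phi_{(\cdot,z)}\wedge\alpha$. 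Substituting, exchanging the order of integration a second time, and pulling the wedge with $\alpha$ back inside the $Z$-integral produces
$$\int_M\omega\wedge d\alpha=(-1)^{k-1}\int_M\Big(\int_Z d\Phi_{(x,z)}\,d\mu(z)\Big)\wedge\alpha=(-1)^{k-1}\int_M\varpi\wedge\alpha ,$$
which is the required identity.

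The routine ingredients are the joint measurability of the coefficients of $\Phi$ and $d\Phi$ on $M\times Z$ (built into the definition of a measurable family) and the interchange of the wedge product with the $Z$-integral, which is pure linear algebra since the target $\Lambda^nT_x^*M$ is one-dimensional. The main obstacle is the rigorous justification of the two applications of Fubini's theorem: one must verify that $(x,z)\mapsto\Phi_{(x,z)}\wedge(d\alpha)_x$ and $(x,z)\mapsto d\Phi_{(x,z)}\wedge\alpha_x$ are integrable on $K\times Z$, and here the pointwise bounds by $\|d\alpha\|_\infty\,|\Phi|_{(x,z)}$ and $\|\alpha\|_\infty\,|d\Phi|_{(x,z)}$ together with the joint integrability established above are the decisive inputs.
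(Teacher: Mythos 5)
Your argument is the one the paper intends: the paper's entire ``proof'' of this lemma is the remark that it follows directly from the definition of weak derivative, and your write-up is exactly that direct verification (pull the $Z$-integral through the wedge product, apply the weak-derivative identity slice-wise in $z$, and exchange the order of integration twice). The only point worth flagging is the joint integrability of $(x,z)\mapsto|\Phi|_{(x,z)}$ and $(x,z)\mapsto|d\Phi|_{(x,z)}$ on $K\times Z$ that licenses your Tonelli/Fubini steps: you correctly single it out as the decisive input, but you present it as established, whereas the stated hypotheses only give integrability in $z$ for each fixed $x$ and local integrability in $x$ for each fixed $z$, which together do not imply integrability on the product. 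This is really a deficiency of the lemma as stated rather than of your argument --- in every application in the paper $Z$ is compact (an interval, $\supp(\kappa)$, or a ball) and the family is continuous in both variables, so the product integrability is automatic --- but a careful write-up should either add it as a hypothesis or say explicitly where it comes from.
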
 

The previous lemma follows directly from definition of weak derivative. 

\medskip

To prove that a measurable $k$-form $\omega$ on $M$ is smooth it is enough to verify that for every set of $k$ vector fields $\{X_1,\ldots,X_k\}$ the function
$$f(x)=\omega_x(X_1(x),\ldots,X_k(x))$$
is smooth on $M$. A sufficient condition for $f$ to be smooth is that for every set of vector fields $\{Y_1,\ldots,Y_m\}$ there exists
$$L_{Y_m}\cdots L_{Y_1}f (x)$$
for all $x\in M$.
The \textit{Lie derivative} with respect to the field $Y$ is defined by
$$L_Y f (x) = \left.\frac{\partial}{\partial t}\right|_{t=0}f(\varphi_t(x)),$$
where $\varphi_t$ is the flow associated with $Y$.

From the above observation and the classical Leibniz Integral Rule one can conclude the following lemma:

\begin{lemma}\label{Leibniz}
Let $M$ and $N$ be two Riemannian manifolds and $\{\Phi_{(x,z)}:x\in M,z\in Z\}$ a smooth family $k$-forms on $M$. If $\Phi_{(x,y)}$ has compact support for every $y\in N$, then the $k$-form on $M$ defined by
$$\omega_x=\int_N\Phi_{(x,y)}dV_N(y)$$
belongs to $\Omega^k(M)$ and its derivative is $d\omega=\int_N d\Phi(\cdot,y) dV_N(y).$
\end{lemma}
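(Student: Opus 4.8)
The plan is to reduce the smoothness of $\omega$ to the pointwise criterion stated just before the lemma, and then to push the required derivatives inside the integral by the classical Leibniz Integral Rule. First I would fix arbitrary smooth vector fields $X_1,\ldots,X_k$ on $M$ and consider
$$f(x)=\omega_x(X_1(x),\ldots,X_k(x))=\int_N \Phi_{(x,y)}(X_1(x),\ldots,X_k(x))\,dV_N(y),$$
where the second equality is precisely the definition (\ref{formaIntegral}) of the integral of a family of forms. Writing $g(x,y)=\Phi_{(x,y)}(X_1(x),\ldots,X_k(x))$, the smoothness of the family $\Phi$ together with the smoothness of the $X_i$ gives that $x\mapsto g(x,y)$ is smooth for each $y$, while the hypothesis that $\Phi_{(\cdot,y)}$ has compact support yields that $g(\cdot,y)$ is compactly supported in $M$ for each fixed $y$.

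By the observation preceding the lemma, to conclude that $f$ is smooth it suffices to show that the iterated Lie derivative $L_{Y_m}\cdots L_{Y_1}f(x)$ exists for every finite collection of smooth vector fields $Y_1,\ldots,Y_m$ and every $x\in M$. Each $L_{Y_i}$ is differentiation along the flow $\varphi^{i}_t$ of $Y_i$, so computing these derivatives reduces, in a precompact coordinate chart around a fixed point $x_0$, to ordinary partial differentiation in the coordinate variables. The Leibniz Integral Rule then gives
$$L_{Y_m}\cdots L_{Y_1}f(x)=\int_N L_{Y_m}\cdots L_{Y_1}\big(g(\cdot,y)\big)(x)\,dV_N(y),$$
the Lie derivatives on the right acting only on the $M$-variable. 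Iterating over all choices of the $Y_i$ yields the smoothness of $f$, and since $X_1,\ldots,X_k$ were arbitrary, $\omega\in\Omega^k(M)$.

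For the derivative formula I would use the intrinsic (Cartan) expression of the exterior derivative,
$$d\eta(X_0,\ldots,X_k)=\sum_{i}(-1)^{i}L_{X_i}\big(\eta(X_0,\ldots,\widehat{X_i},\ldots,X_k)\big)+\sum_{i<j}(-1)^{i+j}\eta([X_i,X_j],X_0,\ldots,\widehat{X_i},\ldots,\widehat{X_j},\ldots,X_k),$$
applied to $\eta=\omega$. Every term on the right is either a Lie derivative of a contraction of $\omega$ or a contraction of $\omega$ itself, so the interchange established above lets me move the integral over $N$ outside each term. Applying the same Cartan formula to $\Phi_{(\cdot,y)}$ then identifies the result term by term with $\int_N d\Phi_{(\cdot,y)}(X_0,\ldots,X_k)\,dV_N(y)$, and since the vector fields are arbitrary this is exactly $d\omega=\int_N d\Phi(\cdot,y)\,dV_N(y)$.

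The main obstacle is the rigorous justification of the interchange of differentiation and integration: the Leibniz Integral Rule requires that the difference quotients of $g$ and of its iterated Lie derivatives be dominated, uniformly for $x$ in a neighborhood of $x_0$, by a function of $y$ integrable over $N$. This is where the compact support of $\Phi_{(\cdot,y)}$ and the joint smoothness of $\Phi$ must be used, to produce the local uniform bounds needed to invoke dominated convergence for the difference quotients; once this domination is in place, the remainder is routine bookkeeping of the Cartan formula.
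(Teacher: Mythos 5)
Your overall strategy is the same as the paper's: the paper derives this lemma in one line from the smoothness criterion stated just before it (existence of all iterated Lie derivatives of $x\mapsto\omega_x(X_1(x),\dots,X_k(x))$) together with the classical Leibniz Integral Rule, which is exactly your plan; finishing with the global Cartan formula for $d$ is a legitimate way to obtain the identity $d\omega=\int_N d\Phi(\cdot,y)\,dV_N(y)$.

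There is, however, a genuine gap, and it sits precisely at the step you defer to your last paragraph: the domination needed to differentiate under the integral sign \emph{is} the content of the lemma, and with the reading of the hypothesis you adopt it is not available. You interpret ``$\Phi_{(x,y)}$ has compact support'' as compactness of the support of $x\mapsto\Phi_{(x,y)}$ in $M$ for each fixed $y$. That gives no control whatsoever on the $y$-integrability of the $x$-derivatives, and the conclusion can genuinely fail under it: take $M=N=\R$, $k=0$, and $g(x,y)=\rho(x)\,a(y)\bigl(1-\cos(b(y)x)\bigr)$ with $\rho$ a bump function, $\rho(0)=1$, $a(y)=(1+y^2)^{-1}$, $b(y)=1+y^2$. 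Each $g(\cdot,y)$ is smooth with compact support, the family is jointly smooth and integrable on $N$, yet $f(x)=\int_N g(x,y)\,dy$ is nonnegative with $f(0)=0$ and, by Fatou, $\liminf_{x\to0}f(x)/x^2\geq\tfrac12\int a(y)b(y)^2\,dy=+\infty$, so $f$ is not even $C^2$. The hypothesis that actually powers the lemma --- and the one used in every application in the paper (integration over $[0,1]$, over $\tfrac12 B$, and against a compactly supported kernel $\kappa$) --- is that the support of $y\mapsto\Phi_{(x,y)}$ is compact in $N$, locally uniformly in $x$. With that reading the domination is immediate: for $x$ in a precompact neighbourhood $K$ of $x_0$ the coefficients of $\Phi$ and of its $x$-derivatives up to any fixed order are continuous, hence bounded, on $\overline K\times S$ with $S\subset N$ compact of finite volume, and this constant times $\mathds{1}_S$ is the integrable majorant the Leibniz rule requires. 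You should correct the direction of the support hypothesis and then actually carry out this short domination argument; as written, your proof stops just before the only nontrivial analytic step.
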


\medskip
Now consider a Lie group $G$. By a  \textit{kernel} on $G$ we mean a smooth function $\kappa:G\to[0,1]$ such that:
\begin{itemize}
    \item $\supp(\kappa)$ is a compact neighborhood of $e\in G$, and
    \item $\int_G \kappa(x)dx=1$.
\end{itemize}

If $\omega$ is a locally integrable $k$-form on $G$ we consider its \textit{convolution} with $\kappa$ as the $k$-form
$$(\omega *\kappa)_x=\int_G (R_z^*\omega)_x\kappa(z)dz.$$

\medskip

\begin{lemma}\label{lemaNormaConvolucion}
There exists a constant $C>0$ such that for every locally integrable $k$-form $\omega$ on $G$ and $x\in G$ we have
$$|\omega*\kappa|_x\leq C |\omega|*\kappa(x),$$
where $|\omega|*\kappa$ is the convolution of the function $x\mapsto|\omega|_x$ with the kernel $\kappa$.
\end{lemma}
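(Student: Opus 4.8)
The plan is to reduce the pointwise estimate for the form-valued convolution to a scalar convolution inequality by passing norms inside the integral, and then to control the distortion of right translations using the left-invariance of the metric. First I would apply the elementary observation recorded just after~(\ref{formaIntegral}) to the family $\Phi_{(x,z)}=(R_z^*\omega)_x\,\kappa(z)$ on the measure space $(G,dz)$; since $\kappa$ is bounded with compact support and $\omega$ is locally integrable, this family is integrable in $z$ for each fixed $x$, and the observation gives immediately
$$|\omega*\kappa|_x\leq\int_G |R_z^*\omega|_x\,\kappa(z)\,dz.$$
Thus everything reduces to comparing $|R_z^*\omega|_x$ with $|\omega|_{xz}$.

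Next I would unwind the definition of the pointwise norm. Since
$$(R_z^*\omega)_x(v_1,\ldots,v_k)=\omega_{xz}\big((dR_z)_x v_1,\ldots,(dR_z)_x v_k\big),$$
the definition of $|\cdot|_{xz}$ together with the submultiplicativity $\|(dR_z)_x v_i\|_{xz}\leq\|(dR_z)_x\|\,\|v_i\|_x$ yields
$$|R_z^*\omega|_x\leq\|(dR_z)_x\|^k\,|\omega|_{xz},$$
where $\|(dR_z)_x\|$ is the operator norm of the differential of $R_z$ at $x$. If I can bound $\|(dR_z)_x\|$ by a constant $M$ uniformly in $x\in G$ and in $z\in\supp(\kappa)$, then setting $C=M^k$ and recalling that for the function $x\mapsto|\omega|_x$ the convolution is $(|\omega|*\kappa)(x)=\int_G |\omega|_{xz}\,\kappa(z)\,dz$, the two displays combine to give exactly $|\omega*\kappa|_x\leq C\,(|\omega|*\kappa)(x)$.

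The crux — and the only place where the hypothesis that the metric is left-invariant intervenes — is this uniform bound on $\|(dR_z)_x\|$. The key remark is that left and right translations commute, $L_x\circ R_z=R_z\circ L_x$, so differentiating at $e$ gives $(dL_x)_z\,(dR_z)_e=(dR_z)_x\,(dL_x)_e$, that is,
$$(dR_z)_x=(dL_x)_z\,(dR_z)_e\,(dL_x)_e^{-1}.$$
Because the metric is left-invariant, both $(dL_x)_e$ and $(dL_x)_z$ are linear isometries, and conjugation by isometries preserves operator norms; hence $\|(dR_z)_x\|=\|(dR_z)_e\|$ for every $x\in G$. The remaining input is purely topological: $z\mapsto\|(dR_z)_e\|$ is continuous on $G$, so it is bounded by some $M$ on the compact set $\supp(\kappa)$, which is precisely the uniform bound required. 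I expect this commutation/left-invariance step to be the substantive one, since it is what converts the a priori $x$-dependent distortion of right translation into a constant; the norm-inside-the-integral reduction and the multilinear estimate are routine and are essentially already prepared by the material preceding the statement.
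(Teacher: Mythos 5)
Your proposal is correct and follows essentially the same route as the paper: pass the pointwise norm inside the $z$-integral, reduce to bounding $\|(d R_z)_x\|$ uniformly, use the commutation $L_x\circ R_z=R_z\circ L_x$ together with the left-invariance of the metric to show $\|(d R_z)_x\|=\|(d R_z)_e\|$, and finish by continuity of $z\mapsto\|(d R_z)_e\|$ on the compact set $\supp(\kappa)$, yielding $C=M^k$. The only cosmetic difference is that the paper works with the evaluation on unit vectors $v_1,\ldots,v_k$ throughout and takes the supremum at the end, whereas you factor the argument through the intermediate inequality $|R_z^*\omega|_x\leq\|(d R_z)_x\|^k|\omega|_{xz}$; the content is identical.
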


\begin{proof}
Let $v_1,\ldots,v_k$ be vectors in $T_xG$, then
\begin{align*}
|(\omega*\kappa)_x(v_1,\ldots,v_k)| 
    &= \left|\int_G(R_z^*\omega)_x(v_1,\ldots,v_k)\kappa(z)dz\right|\\
    &\leq \int_G|(R_z^*\omega)_x(v_1,\ldots,v_k)|\kappa(z)dz\\
    &= \int_G|\omega_{x\cdot z}(d_xR_z(v_1),\ldots,d_xR_z(v_k))|\kappa(z)dz.
\end{align*}
Since $R_z\circ L_x=L_x\circ R_z$, we have $|d_e(R_z\circ L_x)|=|d_e(L_x\circ R_z)|$ (here $|\ \ |$ is the usual operator norm) and therefore $|d_x R_z\circ d_eL_x|=|d_zL_x\circ d_eR_z|$. Using that $L_x$ is an isometry we obtain $|d_xR_z|=|d_eR_z|$ for every $x\in G$. The function $z\mapsto|d_eR_z|$ is continuous, then it has a maximum $M$ in $\supp(\kappa)$. If $\|v_1\|=\ldots=\|v_k\|=1$,
$$|\omega_{x\cdot z}(d_xR_z(v_1),\ldots,d_xR_z(v_k))|\leq M^k|\omega|_{x\cdot z},$$
which implies $|\omega*\kappa|_x\leq C |\omega|*\kappa(x)$ with $C=M^k$.
\end{proof}

A consequence of Lemma \ref{lemaNormaConvolucion} is that the convolution of a locally integrable form is also locally integrable.

\begin{proposition}\label{PropDerConv}
Let $\omega$ be a locally integrable $k$-form on $G$, then:
\begin{enumerate}
    \item[(i)] If $\omega$ has weak derivative $d\omega$, then the convolution $\omega*\kappa$ has weak derivative and
    $$d(\omega*\kappa)=d\omega*\kappa.$$
    \item[(ii)] The convolution $\omega*\kappa$ is a differential form.
\end{enumerate}
\end{proposition}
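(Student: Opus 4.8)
The plan is to handle the two assertions separately: the weak-derivative identity in~(i) follows from the differentiation-under-the-integral result Lemma~\ref{lemaDerivadaDebil}, while the smoothness in~(ii) is obtained by a change of variables that transfers the irregularity of $\omega$ onto the integration variable, followed by a Leibniz-type argument. For part~(i), I would apply Lemma~\ref{lemaDerivadaDebil} to the family $\Phi_{(x,z)}=(R_z^*\omega)_x\,\kappa(z)$ over the measure space $(G,dz)$. Three conditions must be checked. First, $\Phi$ is integrable on $Z=G$: the estimate obtained in the proof of Lemma~\ref{lemaNormaConvolucion} gives $|(R_z^*\omega)_x|\le M^k|\omega|_{xz}$, so $z\mapsto|\Phi|_{(x,z)}=|(R_z^*\omega)_x|\kappa(z)$ lies in $L^1(G,dz)$ because $\kappa$ has compact support and $\omega$ is locally integrable (this also re-proves that $\omega*\kappa$ is locally integrable). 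Second, for each fixed $z$ the form $x\mapsto(R_z^*\omega)_x$ has weak derivative $R_z^*(d\omega)$, since $R_z$ is a diffeomorphism and pullback by a diffeomorphism commutes with the weak exterior derivative; this follows from the defining identity in Remark~\ref{ObsDef} together with the change-of-variables formula applied to compactly supported test forms. Multiplying by the constant $\kappa(z)$ gives $d\Phi_{(x,z)}=R_z^*(d\omega)_x\,\kappa(z)$. Third, the same estimate applied to $d\omega$ shows $z\mapsto|d\Phi|_{(x,z)}\in L^1(G,dz)$. Lemma~\ref{lemaDerivadaDebil} then yields $d(\omega*\kappa)_x=\int_G R_z^*(d\omega)_x\,\kappa(z)\,dz=(d\omega*\kappa)_x$.

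For part~(ii), I would invoke the smoothness criterion stated just before Lemma~\ref{Leibniz}: it suffices to show that $f(x)=(\omega*\kappa)_x(X_1(x),\dots,X_k(x))$ is smooth for every choice of smooth vector fields $X_1,\dots,X_k$. The decisive step is a change of variables. Writing $(R_z^*\omega)_x=\omega_{xz}(d_xR_z\,\cdot)$ and substituting $z=x^{-1}w$ --- a left translation, hence an isometry of the left-invariant metric and therefore volume-preserving, so that $dz=dw$ --- one obtains
$$f(x)=\int_G \omega_w\big(d_xR_{x^{-1}w}X_1(x),\dots,d_xR_{x^{-1}w}X_k(x)\big)\,\kappa(x^{-1}w)\,dw.$$
All dependence on $x$ now sits in the smooth factors $\kappa(x^{-1}w)$, $d_xR_{x^{-1}w}$ and $X_i(x)$, whereas the only rough factor, $\omega_w$, depends solely on the integration variable. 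Moreover the integrand is supported, in $w$, on $x\cdot\supp(\kappa)$, which stays inside a fixed compact set as $x$ ranges over a compact neighborhood. I would then differentiate under the integral sign: each iterated Lie derivative $L_{Y_m}\cdots L_{Y_1}f$ exists because the corresponding $x$-derivatives of the integrand are dominated, on such a neighborhood, by $C\,|\omega|_w\,\mathbf{1}_K(w)$, which is in $L^1(G,dw)$ by local integrability of $\omega$. The criterion then forces $f$ to be smooth, so $\omega*\kappa\in\Omega^k(G)$.

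The routine parts are the two integrability estimates and the commutation of pullback with the weak derivative; the genuine content lies in the change of variables of part~(ii), which is precisely where the smoothing occurs, moving the irregularity of $\omega$ to the integration variable so that the $x$-dependence becomes that of a smooth kernel. The main obstacle I anticipate is the bookkeeping: carrying the argument vectors $d_xR_{x^{-1}w}X_i(x)$ correctly through the substitution, and verifying that the domination of the $x$-derivatives of the integrand is uniform in $x$ over compact sets, so that differentiation under the integral is fully justified.
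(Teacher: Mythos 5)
Your part (i) follows exactly the paper's argument: the same family $\Phi_{(x,z)}=(R_z^*\omega)_x\,\kappa(z)$, the same verification that $d(R_z^*\omega)=R_z^*(d\omega)$ in the weak sense by testing against compactly supported forms and changing variables, and the same appeal to Lemma~\ref{lemaDerivadaDebil}. Part (ii) is correct but takes a different route in its organization. The paper first proves smoothness of $f*\kappa$ for a locally integrable \emph{function} $f$, using precisely your change of variables $f*\kappa(x)=\int_G f(y)\kappa(x^{-1}\cdot y)\,dy$ together with the Leibniz integral rule; it then reduces the case of a $k$-form to this scalar case by evaluating $\omega*\kappa$ on a frame of \emph{right-invariant} vector fields $X_{i_1},\dots,X_{i_k}$: since $d_xR_z(X_i(x))=X_i(xz)$ for such fields, one has the exact identity $(\omega*\kappa)(X_{i_1},\dots,X_{i_k})=\bigl(\omega(X_{i_1},\dots,X_{i_k})\bigr)*\kappa$, so no further differentiation under the integral is needed beyond the $0$-form case. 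You instead keep arbitrary vector fields, perform the substitution $w=xz$ directly in the $k$-form expression, and differentiate under the integral sign, which obliges you to track the smooth $x$-dependence of $d_xR_{x^{-1}w}$ and to verify the uniform domination by $C\,|\omega|_w\mathbf{1}_K(w)$ on compact sets. Both routes rest on the same underlying observation (left translations are isometries of the left-invariant metric, hence volume-preserving, so the roughness of $\omega$ can be shifted onto the integration variable); the paper's right-invariant-frame trick buys a cleaner reduction to a genuine scalar convolution at the cost of choosing a special frame, while your version is more self-contained but carries more bookkeeping. Your argument is complete modulo the standard dominated-differentiation theorem, which your domination estimate does justify.
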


\begin{proof}
\begin{enumerate}
\item[(i)] 
For every $z$ we have $d(R_z^*\omega)=R_z^*d\omega$ in a weak sense. To see this take $\beta\in\Omega^{n-k-1}(G)$ with compact support, then
\begin{align*}
\int_G(R_z^*d\omega)\wedge\beta &= \int_G R_z^*(d\omega\wedge R^*_{z^{-1}}\beta)\\
&= \int_G d\omega\wedge R^*_{z^{-1}}\beta\\
&= (-1)^{k+1}\int_G\omega\wedge d R^*_{z^{-1}}\beta\\
&=  (-1)^{k+1}\int_G(R^*_z\omega)\wedge d\beta.
\end{align*}

Therefore the weak derivative with respect to $x\in G$ of the $k$-form $\Phi_{(x,z)}=(R_z^*d\omega)_x\kappa(z)$ is 
$$d\Phi(x,z)=(R^*_z d\omega)_x\kappa(z).$$
Since $z\mapsto d\Phi(x,z)$ has compact support for all $x\in G$, by Lemma \ref{lemaDerivadaDebil} we conclude 
$$(d\omega*\kappa)=\int_G(R^*_zd\omega)\kappa(z)dz$$
is the weak derivative of the convolution $\omega*\kappa$.

\item[(ii)]
Suppose first that $\omega=f$ is a $0$-form, which is equivalent to say that it is a locally integrable function on $G$. Consider $Y$ a vector field on $G$ with flow $\varphi_t$. First observe that
$$f*\kappa(x)=\int_G f(x\cdot z)\kappa(z)dz=\int_G f(y)\kappa(x^{-1}\cdot y)dy.$$
Then
\begin{align*}
L_{Y} (f*\kappa)(x)
&= \left.\frac{\partial}{\partial t}\right|_{t=0}(f*\kappa(\varphi_t(x)))\\
&=\left.\frac{\partial}{\partial t}\right|_{t=0} \int_G f(y)\kappa(\varphi_t(x)^{-1}\cdot y)dy.
\end{align*}
Since $\varphi$ is smooth and $\kappa$ is smooth with compact support, the classical Leibniz integral Rule implies that this derivative exists and
\begin{align*}
L_{X} (f*\kappa)(x)
&= \int_G f(y)\left.\frac{\partial}{\partial t}\right|_{t=0}\kappa(\varphi_t(x)^{-1}\cdot y)dy. 
\end{align*}
Using this argument we can prove by induction that $L_{Y_m}\ldots L_{Y_m}(f*\kappa)(x)$ exists for all $x\in G$ and every family of vector fields $Y_1,\ldots,Y_m$, which implies that $f*\kappa$ is smooth.

Now consider $\{e_1,\ldots,e_n\}$ a basis of $T_e G$ and $X_1,\ldots,X_n$ the right-invariant fields verifying $X_i(e)=e_i$. Let $\varphi_t^i$ be the flow associated with $X_i$ for every $i=1,\ldots,n$. If $\omega$ is a $k$-form with $k\geq 1$ we set
$$f_{i_1,\ldots,i_k}(x)=(\omega*\kappa)_x(X_{i_1}(x),\dots,X_{i_k}(x)).$$
To prove that $\omega*\kappa$ is smooth it is enough to prove that all these functions are smooth. Observe that if 
$$g_{i_1,\ldots,i_k}(x)=\omega(x)(X_{i_1}(x),\dots,X_{i_k}(x)),$$
then
$f_{i_1,\ldots,i_k}=g_{i_1,\ldots,i_k}*\kappa$. This reduces the general case to the case $k=0$ and finishes the proof.
\end{enumerate}
\end{proof}

The last lemma of the section relates the $L^\phi$-norm with the $L^1$-norm in the case of finite measure.

\begin{lemma}\label{OrliczL1}
If $\mu$ is finite, then $L^\phi(Z,\mu)\subset L^1(Z,\mu)$ and the inclusion is continuous, with norm bounded depending only on $\mu(Z)$ and $\phi$.
\end{lemma}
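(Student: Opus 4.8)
The plan is to bound the $L^1$-norm of a function directly in terms of its Luxembourg norm by a pointwise comparison argument, using the finiteness of $\mu$ to handle the region where the function is small. First I would fix $f\in L^\phi(Z,\mu)$ and, given any $\gamma>\|f\|_{L^\phi}$, record from the definition of the Luxembourg norm that $\int_Z\phi(f/\gamma)\,d\mu\leq 1$. The whole estimate will be carried out with this arbitrary $\gamma$, and I will pass to the infimum only at the very end.

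The elementary fact I would isolate is that, since $\phi$ is convex with $\phi(0)=0$, the map $s\mapsto\phi(s)/s$ is non-decreasing on $(0,+\infty)$; as $\phi$ vanishes only at $0$ we have $\phi(1)>0$, and therefore $|s|\leq\phi(s)/\phi(1)$ whenever $|s|\geq 1$. I would then split $Z$ into $A=\{|f|\leq\gamma\}$ and $B=\{|f|>\gamma\}$. On $A$ the trivial bound $|f|\leq\gamma$ together with the finiteness of $\mu$ gives $\int_A|f|\,d\mu\leq\gamma\,\mu(Z)$. On $B$ one has $|f/\gamma|>1$, so the comparison above applied with $s=f/\gamma$ yields $|f|\leq\gamma\,\phi(f/\gamma)/\phi(1)$, and integrating against the modular estimate gives $\int_B|f|\,d\mu\leq(\gamma/\phi(1))\int_Z\phi(f/\gamma)\,d\mu\leq\gamma/\phi(1)$.

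Adding the two contributions produces
\[
\int_Z|f|\,d\mu\leq\gamma\Big(\mu(Z)+\tfrac{1}{\phi(1)}\Big),
\]
and letting $\gamma\downarrow\|f\|_{L^\phi}$ yields $\|f\|_{L^1}\leq C\,\|f\|_{L^\phi}$ with $C=\mu(Z)+1/\phi(1)$, a constant depending only on $\mu(Z)$ and $\phi$, as claimed; this simultaneously shows $L^\phi(Z,\mu)\subset L^1(Z,\mu)$ and that the inclusion is continuous. The only points needing any care—and these are the mildest of obstacles—are the monotonicity of $\phi(s)/s$, which is a one-line consequence of convexity and $\phi(0)=0$, and the passage from the bound valid for every $\gamma>\|f\|_{L^\phi}$ to the value at the infimum; neither presents a genuine difficulty, so I do not expect a real obstruction in this argument.
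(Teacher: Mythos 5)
Your argument is correct, but it is not the route the paper takes. The paper's proof is a one-line application of Jensen's inequality to the modular: since $\phi\bigl(\tfrac{1}{\mu(Z)}\int_Z\tfrac{|f|}{\gamma}\,d\mu\bigr)\leq\tfrac{1}{\mu(Z)}\int_Z\phi\bigl(\tfrac{f}{\gamma}\bigr)\,d\mu$, the set of admissible $\gamma$ for the Luxembourg norm is contained in $\{\gamma:\mu(Z)\,\phi(\tfrac{1}{\mu(Z)\gamma}\|f\|_{L^1})\leq 1\}$, which upon inverting $\phi$ gives $\|f\|_{L^1}\leq\mu(Z)\,\phi^{-1}(1/\mu(Z))\,\|f\|_{L^\phi}$. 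You instead avoid Jensen entirely, splitting $Z$ according to whether $|f|\leq\gamma$ or $|f|>\gamma$ and using the pointwise comparison $|s|\leq\phi(s)/\phi(1)$ for $|s|\geq 1$, which is the standard consequence of the monotonicity of $s\mapsto\phi(s)/s$ for convex $\phi$ with $\phi(0)=0$; all the steps you flag (that monotonicity, the fact that the modular is at most $1$ for every $\gamma$ strictly above the Luxembourg norm, and the passage to the infimum) are indeed unproblematic. The trade-off is in the constants: your bound $\mu(Z)+1/\phi(1)$ is perfectly adequate for the lemma as stated (it depends only on $\mu(Z)$ and $\phi$), while the paper's Jensen constant $\mu(Z)\,\phi^{-1}(1/\mu(Z))$ is generally sharper — for $\phi(t)=|t|^p$ it recovers the exact Hölder constant $\mu(Z)^{1-1/p}$ — but requires knowing that $\phi$ is invertible on $[0,+\infty)$, which again follows from convexity and $\phi$ vanishing only at $0$. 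Either proof serves the paper's later uses of the lemma equally well.
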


\begin{proof} Let $f\in L^\phi(Z,\mu)$, then
\begin{align*}
\|f\|_{L^\phi}
    &=\inf\left\{\gamma>0 : \int_Z\phi\left(\frac{f}{\gamma}\right)d\mu\leq 1\right\}\\
    &\geq \inf\left\{\gamma>0 : \mu(Z)\phi\left(\frac{1}{\mu(Z)}\int_Z\frac{f}{\gamma}d\mu\right)\leq 1\right\}
\end{align*}
From this we obtain $\|f\|_{L^1}\leq \mu(Z)\phi^{-1}(1/\mu(Z))\|f\|_{L^\phi}$.
\end{proof}

\section{Proof of main theorem}

Let $\mathcal{U}$ be an uniformly locally finite open covering on the Lie group $G$ such that every non-empty intersection $U_1\cap\cdots\cap U_k$ is uniformly bi-Lipschitz to the unit Euclidean ball. By \textit{uniformly locally finite} we mean that there exists a uniform constant $C$ such that every point in $G$ belongs to at most $C$ elements of $\mathcal{U}$. Take $X_G$ as in the Remark \ref{obssimplicial}, that is, $$X^\ell=\mathcal{U}_\ell=\{U_0\cap\cdots\cap U_k\neq\emptyset : U_0,\ldots,U_\ell\in\mathcal{U}\},$$
and every simplex is isometric to the standard one of the same dimension.

We consider, for a fixed Young function $\phi$, the following cochain complexes:

\begin{itemize}
    \item $\mathcal{L}^\phi \Omega^k(G,\mathcal{U})$ is the space of all differential forms $\omega\in\Omega^k(G)$ such that $\omega|_U$ and $d\omega|_U$ are in $L^\phi\Omega^k(U)$ for every $U\in\mathcal{U}$, and the functions $U\mapsto \|\omega|_U\|_{L^\phi}$ and   $U\mapsto\|d\omega|_U\|_{L^\phi}$ are in $\ell^\phi(\mathcal{U})$. The norm of $\omega\in\mathcal{L}^\phi \Omega^k(G,\mathcal{U})$ is defined by
    $$|\omega|_{\mathcal{L}^\phi}=\|\theta\|_{\ell^\phi}+\|\theta'\|_{\ell^\phi},$$
    where $\theta(U)=\|\omega|_U\|_{L^\phi}$ and $\theta'(U)=\|d\omega|_U\|_{L^\phi}$. Naturally, the map defining the cochain complex is the usual derivative.
    
    \item $\mathcal{I}^\phi\Omega^k(G,\mathcal{U})=L^\phi\Omega^k(G)\cap \mathcal{L}^\phi\Omega^k(G,\mathcal{U})$ with the norm  $|\ \ |_{\mathcal{I}^\phi}=|\ \ |_{L^\phi}+|\ \ |_{\mathcal{L}^\phi}.$
    
    \item If $G$ is Gromov-hyperbolic and $\xi$ is a point in $\partial G$, we consider $\mathcal{L}^\phi \Omega^k(G,\mathcal{U},\xi)$ the subcomplex consisting of all forms $\omega\in\mathcal{L}^\phi \Omega^k(G,\mathcal{U})$ that vanish on a neighborhood of $\xi$. In this case we also define $\mathcal{I}^\phi\Omega^k(G,\mathcal{U},\xi)=L^\phi\Omega^k(G,\xi)\cap \mathcal{L}^\phi\Omega^k(G,\mathcal{U},\xi)$.
\end{itemize}

If $\phi(t)=|t|^p$ for every $t\in\R$, then $L^\phi\Omega^k(G)=\mathcal{L}^\phi\Omega^k(G,\mathcal{U})=\mathcal{I}^\phi\Omega^k(G,\mathcal{U})$ and the norms on these spaces are equivalent. However, it is not true for general Young functions, as one can see in the following example:

\begin{example}\footnote{This example was given to me by Marc Bourdon.}
We take the Young function $\phi:\R\to[0,+\infty)$,
$$\phi(t)=\phi_{p,\kappa}(t)=\frac{|t|^p}{\log(e+|t|^{-1})^\kappa},$$
with $p>1$ and $\kappa>0$. This is a \textit{doubling Young function}, which means that there is a constant $D$ such that $\phi(2t)\leq D\phi(t)$ for every $t\in\R$. This condition implies some nice properties of the corresponding Orlicz space.

We want to construct a $1$-form $\omega$ in $\mathcal{L}^\phi\Omega^1(\R,\mathcal{U})$ and out of $L^\phi\Omega^1(\R)$, where 
$\mathcal{U}=\{U_n=(n-\epsilon,n+1+\epsilon) : n\in\Z\}$ with $\epsilon>0$ much smaller than 1.

Let $\{a_n\}_{n\in\Z}$ be a sequence of positive numbers such that:
\begin{itemize}
    \item $\sum a_n^p=+\infty$, and
    \item $\sum\phi(a_n)<+\infty$.
\end{itemize}
Take for every $n\in \Z$ an interval $A_n$ in $\R$ such that $A_n\subset (n+2\epsilon,n+1-2\epsilon)$ and $long(A_n)=a_n^p$ (we can suppose that $a_n$ is small enough for every $n\in\Z$). Consider the function $f:\R\to\R$ defined by 
$$f=\sum_{n\in\Z} \mathbbm{1}_{A_n}.$$ 

On the one hand, if $\gamma>0$,
$$\int_\R\phi\left(\frac{f(t)}{\gamma}\right)dt=\sum_{n\in\Z}\int_n^{n+1}\phi\left(\frac{\mathbbm{1}_{A_n}(t)}{\gamma}\right)dt=\sum_{n\in\Z}a_n^p\phi\left(\frac{1}{\gamma}\right)=+\infty.$$
But on the other hand
$$\int_{U_n}\phi\left(\frac{f(t)}{\gamma}\right)dt=a_n^p\phi\left(\frac{1}{\gamma}\right)=\frac{a_n^p}{\gamma^p\log(e+\gamma)^\kappa}\leq \left(\frac{a_n}{\gamma}\right)^p,$$
which implies $\|f|_{U_n}\|_{L^\phi}\leq a_n$ and then
\begin{equation}\label{normafinita}
\sum_{n\in\Z}\phi(\|f|_{U_n}\|_{L^\phi})\leq \sum_{n\in\Z}\phi(a_n)<+\infty.    
\end{equation}
It is not difficult to see, using the doubling condition, that \eqref{normafinita} implies that $\{\|f|_{U_n}\|\}_{n\in\Z}$ belongs to $\ell^\phi(\Z)$.

We can find a smooth function $g$ close enough from $f$ such that $g-f\in L^\phi(\R)$ and $\{\|(g-f)|_{U_n}\|_{L^\phi}\}_{n\in\Z}\in\ell^\phi(\Z)$ and consider the $1$-form $\omega=g\ dt$. Since $|\omega|_t=|g(t)|$ and $d\omega=0$ we can see that $\omega\in\mathcal{L}^\phi\Omega^1(\R,\mathcal{U})$ and $\omega\notin L^\phi\Omega^1(\R)$.

In this case the other inclusion is true. One can prove that $L^\phi\Omega^k(\R)\subset\mathcal{L}^\phi\Omega^k(\R,\mathcal{U})$ for $k=0,1$ using the inequality $\phi(s)\phi(t)\leq 2^\kappa\phi(st)$. In fact, this inclusion can be proved for every Riemannian manifold with bounded geometry and every doubling Young function satisfying an inequality $\phi(t)\phi(s)\leq C\phi(st)$ for all $s,t\in\R$ and some constant $C$.
\end{example}

We will prove Theorem \ref{main} in three steps. 

\begin{proposition}[First step]\label{PropEq1}
The cochain complexes $(\ell^\phi C^*(X_G),\delta)$ and $(\mathcal{L}^\phi\Omega^*(G,\mathcal{U}),d)$ are homotopically equivalent. So are the relative cochain complexes $(\ell^\phi C^*(X_G,\bar{\xi}),\delta)$ and $(\mathcal{L}^\phi\Omega^*(G,\mathcal{U},\xi),d)$. 
\end{proposition}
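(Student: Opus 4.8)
The plan is to compare the two complexes through the \v{C}ech--de Rham bicomplex attached to the cover $\mathcal{U}$. For $p,q\geq 0$, let $K^{p,q}$ be the space of $\ell^\phi$-summable families $(\alpha_\sigma)_{\sigma\in X^p}$ of forms $\alpha_\sigma\in L^\phi\Omega^q(\sigma)$ on the intersections $\sigma=U_0\cap\cdots\cap U_p$, normed by the $\ell^\phi$-norm over $\sigma\in X^p$ of the quantities $\|\alpha_\sigma\|_{L^\phi}+\|d\alpha_\sigma\|_{L^\phi}$, and equip it with the \v{C}ech coboundary $\delta$ in the $p$-direction and the exterior derivative $d$ in the $q$-direction. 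The two complexes in the statement are the two edges of the total complex $(\mathrm{Tot}^\bullet,D)$, $D=\delta+(-1)^pd$: restriction $\omega\mapsto(\omega|_U)_U$ realizes $\mathcal{L}^\phi\Omega^q(G,\mathcal{U})$ as $\ker(\delta\colon K^{0,q}\to K^{1,q})$, while the inclusion of locally constant cochains realizes $\ell^\phi C^p(X_G)$ as $\ker(d\colon K^{p,0}\to K^{p,1})$. I would show that both edge inclusions are \emph{continuous} chain-homotopy equivalences onto $\mathrm{Tot}^\bullet$; composing one equivalence with a homotopy inverse of the other then yields the desired homotopy equivalence, whose concrete realizations are a Whitney map $\ell^\phi C^\ast(X_G)\to\mathcal{L}^\phi\Omega^\ast(G,\mathcal{U})$ and an averaging map in the opposite direction.

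First I would contract the rows (fixed $q$). The augmented \v{C}ech complex $0\to\Omega^q(G)\to K^{0,q}\xrightarrow{\delta}K^{1,q}\xrightarrow{\delta}\cdots$ is exact, with contracting homotopy built from a partition of unity $\{\rho_U\}$ subordinate to $\mathcal{U}$ via $(h\alpha)_\sigma=\sum_U\rho_U\,\alpha_{U\sigma}$, where $U\sigma$ denotes $\sigma$ with $U$ adjoined. Uniform local finiteness of $\mathcal{U}$, together with uniform bounds on $\rho_U$ and $d\rho_U$ (available since the intersections are uniformly bi-Lipschitz to the ball) and the norm equivalence of Remark \ref{ObsEqivalenciaOrlicz}, make $h$ bounded in the combined $\ell^\phi(L^\phi)$ norm. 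This shows the restriction map $\mathcal{L}^\phi\Omega^\ast(G,\mathcal{U})\to\mathrm{Tot}^\bullet$ is a continuous homotopy equivalence.

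Next I would contract the columns (fixed $p$). Each $\sigma$ is a uniformly bi-Lipschitz image of the Euclidean ball, hence contractible, so the Poincar\'e lemma furnishes a homotopy operator $P_\sigma\colon\Omega^q(\sigma)\to\Omega^{q-1}(\sigma)$ with $dP_\sigma+P_\sigma d=\mathrm{id}-\pi_\sigma$, where $\pi_\sigma$ is the projection onto the constants given by the average over $\sigma$. The operator $P_\sigma$ is an integration of a family of forms along the contraction, so its $L^\phi$-boundedness is controlled by the machinery of Section \ref{Cap4Sec2S1}; since $\sigma$ has uniformly bounded measure, Lemma \ref{OrliczL1} lets me pass between $L^\phi$ and $L^1$ and bound both $\|P_\sigma\|_{L^\phi\to L^\phi}$ and $\|\pi_\sigma\|_{L^\phi\to\R}$ uniformly in $\sigma$. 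Assembling these over $X^p$ with uniform local finiteness yields a bounded column homotopy, so the inclusion $\ell^\phi C^\ast(X_G)\to\mathrm{Tot}^\bullet$ is a continuous homotopy equivalence.

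For the relative statement I would check that every operator involved is support-controlled: the partition-of-unity homotopy, the Poincar\'e operators (which preserve support inside each $\sigma$), the averaging projections, and the two edge maps each send a cochain or form vanishing near $\xi$ to one vanishing on a possibly smaller but still full neighborhood of $\xi$; since simplices have bounded diameter and $\mathcal{U}$ is uniformly locally finite, this shrinking is controlled exactly as in the tracking of neighborhoods of $\xi$ in the proof of Theorem \ref{invarianza}(2). The main obstacle is the \emph{uniformity} of the Orlicz estimates and their global assembly: a general Young function is not homogeneous, so one cannot rescale as in the $L^p$ case and must instead rely on the equivalence $\|\cdot\|_{L^{K\phi}}\sim\|\cdot\|_{L^\phi}$ and on Lemma \ref{OrliczL1} to bound the Poincar\'e and averaging operators, and then verify that the constants produced by the uniform bi-Lipschitz geometry and uniform local finiteness do not accumulate when summing over simplices in either direction of the bicomplex. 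Confirming that the total-complex homotopies obtained by telescoping the row and column homotopies remain bounded in the combined $\ell^\phi(L^\phi)$ norm is the crux of the argument.
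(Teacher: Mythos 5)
Your overall architecture coincides with the paper's: the same $\ell^\phi(L^\phi)$ \v{C}ech--de Rham bicomplex over $\mathcal{U}$, rows contracted by a partition of unity, columns contracted by a Poincar\'e homotopy on the uniformly bi-Lipschitz intersections, and the conclusion extracted from the double-complex lemma (Lemma \ref{weyl}) that identifies both edge complexes with the total complex; the relative case is handled, as in the paper, by checking that every operator preserves the subcomplex of cochains vanishing near $\xi$.

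The gap is in the column contraction, which is exactly the hard analytic point. You propose to bound $\|P_\sigma\|_{L^\phi\to L^\phi}$ by ``the machinery of Section \ref{Cap4Sec2S1}'' together with Lemma \ref{OrliczL1}. That lemma only gives the continuous inclusion $L^\phi\subset L^1$ on a finite measure space: it controls the \emph{input} in $L^1$ by its $L^\phi$-norm, whereas you need an $L^\phi$ bound on the \emph{output} $P_\sigma\omega$, and nothing in Section \ref{Cap4Sec2S1} produces that. Moreover the cone operator $\chi_x$ based at a single point $x$ is not known to be bounded on $L^\phi$ at all; the paper must first replace it by the average
$h(\omega)=\Vol\left(\frac12 B\right)^{-1}\int_{\frac12 B}\chi_x(\omega)\,dx$,
derive the pointwise Riesz-type estimate $|h(\omega)|_y\preceq\int_{B(y,2)}|z-y|^{1-n}|\omega|_z\,dz$, and only then apply Jensen's inequality with respect to the finite measure $|z-y|^{1-n}dz$ together with the norm equivalence of Remark \ref{ObsEqivalenciaOrlicz} to obtain $\|h(\omega)\|_{L^\phi}\preceq\|\omega\|_{L^\phi}$ (this is Lemma \ref{alciclicoOrlicz}); uniformity over the intersections then comes from transporting $h$ by the uniformly bi-Lipschitz charts via Lemma \ref{lemaOrliczLipschitz}. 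Without the averaging of the cone point and the Jensen/kernel estimate, your column homotopy is not known to be bounded, so this step as written would fail. The remainder of the plan (the partition-of-unity row homotopy, the assembly of estimates via uniform local finiteness, and the support tracking for the relative case) matches the paper and is sound.
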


To prove the proposition we need some lemmas. The first one is a $L^\phi$-version of Lemma 8 in \cite{Pa95}.

\begin{lemma}\label{alciclicoOrlicz}
Let $B$ be the unit ball in the Euclidean space $\R^n$.
The cochain complex $(L^\phi \Omega^*(B),d)$ retracts to the complex $(\R\to 0\to 0\to \ldots)$.
\end{lemma}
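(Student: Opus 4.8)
The plan is to realize the Poincaré lemma on $B$ by an explicit cochain homotopy and then to show that this homotopy is bounded for the Luxembourg norm, so that the resulting contraction is continuous. I would take the homotopy operator $K=K_k\colon\Omega^k(B)\to\Omega^{k-1}(B)$ coming from a contraction of the ball to a point, together with the map $P$ sending a form to its ``constant part'' (a weighted mean on functions, and $0$ on forms of positive degree) and the inclusion $i$ of the constants. The algebraic Poincaré identity
\[
dK+Kd=\mathrm{Id}-iP
\]
holds on smooth forms and is purely formal; moreover $P\circ i=\mathrm{Id}$ on $\R$. Hence, once $K$ is known to be continuous, $P$ is the retraction, $i$ its section and $K$ the continuous homotopy between $iP$ and the identity, which is exactly the asserted retraction onto $(\R\to 0\to 0\to\cdots)$. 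Since $d(K\omega)=\omega-Kd\omega-iP\omega$, boundedness of $K$ for $\|\cdot\|_{L^\phi}$ on all degrees automatically yields boundedness for the graph norm $|\cdot|_{L^\phi}=\|\cdot\|_{L^\phi}+\|d\cdot\|_{L^\phi}$. So the entire content of the lemma is the $L^\phi$-boundedness of $K$.

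The naive cone operator based at a single point is \emph{not} bounded (already not on $L^p$): the change of variables $y=tx$ along radial rays produces a factor $t^{-n}$ whose radial integral diverges in low degrees. The fix I would adopt, which is the mechanism behind Lemma 8 of \cite{Pa95}, is to average the vertex of the cone over $B$ against a fixed smooth probability density (the averaged cone, or Iwaniec-Lutoborski, operator). For this choice $K$ applied to a smooth form is again smooth and admits a kernel representation with the pointwise bound
\[
|K\omega|_x\le C\int_B \frac{|\omega|_y}{|x-y|^{\,n-1}}\,dy,
\]
a weakly singular kernel of integrable order on the bounded set $B$.

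The main step — and the real obstacle — is then a Schur-type boundedness criterion adapted to Orlicz spaces, replacing the Young convolution inequality used in the $L^p$ case. Concretely, I would prove that if $H(x,y)\ge 0$ satisfies $\sup_x\int_B H(x,y)\,dy\le M_1$ and $\sup_y\int_B H(x,y)\,dx\le M_2$, then $Sg(x)=\int_B H(x,y)\,g(y)\,dy$ is bounded on $L^\phi(B)$. Normalizing $H(x,\cdot)$ to a probability density, using that $\phi$ is convex with $\phi(0)=0$ (so $\phi(\lambda u)\le\lambda\phi(u)$ for $\lambda\in[0,1]$), applying Jensen's inequality and then Fubini gives, for $g\ge 0$,
\[
\int_B \phi\!\left(\frac{Sg(x)}{M_1\gamma}\right)dx
\le \frac{1}{M_1}\int_B \phi\!\left(\frac{g(y)}{\gamma}\right)\left(\int_B H(x,y)\,dx\right)dy
\le \frac{M_2}{M_1}\int_B \phi\!\left(\frac{g(y)}{\gamma}\right)dy,
\]
so that taking $\gamma=\|g\|_{L^\phi}$ and rescaling (invoking Remark \ref{ObsEqivalenciaOrlicz}) yields $\|Sg\|_{L^\phi}\preceq\|g\|_{L^\phi}$. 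Since $|x-y|^{1-n}$ satisfies both Schur conditions on $B$, this gives $\|K\omega\|_{L^\phi}\preceq\|\,|\omega|\,\|_{L^\phi}=\|\omega\|_{L^\phi}$ in every degree, hence the graph-norm bound, completing the continuous retraction. I expect this Orlicz Schur estimate to be the crux: it is where the hypotheses defining a Young function (convexity and vanishing only at $0$) are genuinely used, and it is the precise replacement for the $L^1$-kernel convolution estimate available in the classical $L^p$ argument.
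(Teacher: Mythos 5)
Your proposal is correct and follows essentially the same route as the paper: the averaged-cone (Poincar\'e) homotopy with vertex averaged over a ball, the pointwise bound by the weakly singular kernel $|x-y|^{1-n}$, and then continuity via Jensen's inequality with the normalized kernel plus Fubini and the uniform bound on $\int_B|x-y|^{1-n}dx$ --- which is exactly the ``Orlicz Schur test'' you isolate, carried out inline in the paper. The only cosmetic differences are that the paper averages the vertex over $\frac{1}{2}B$ (to keep the averaged form smooth via the Leibniz rule) and treats degree $0$ separately instead of writing the identity as $dK+Kd=\mathrm{Id}-iP$ in all degrees.
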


\begin{proof}
Fix $x\in B$. Suppose that $\chi:\Omega^k(B)\to\Omega^{k-1}(B)$ is defined for all $k\geq 1$ so that for every $(k-1)$-simplex $\tau\subset B$, we have
$$\int_\tau\chi(\omega)=\int_{C_\tau}\omega$$
for every differential $k$-form $\omega$. The cone $C_\tau$ is defined as follows: If $\tau=(x_0,\ldots,x_{k-1})$, then $C_\tau=(x,x_0,\ldots,x_{k-1})$. The function $\chi$ will depend on $x$, we write $\chi_x=\chi$ if necessary.\\

\underline{Claim}:
\begin{equation}\label{dos} 
\chi d+d\chi=\mathrm{Id}.
\end{equation}

\medskip

Take $\sigma$ a $k$-simplex in $B$ and $\omega\in\Omega^k(B)$, then
$$
\int_\sigma \chi(d\omega) = \int_{C_\sigma} d\omega = \int_{\partial C_\sigma}\omega,$$
where the last equality comes from Stokes' theorem. If $\partial\sigma=\tau_0+\ldots+\tau_k$, we have
\begin{align*}
\int_\sigma \chi(d\omega) 
    &=\int_\sigma \omega-\sum_{i=0}^k \int_{C_{\tau_i}}\omega =\int_\sigma \omega- \int_{\partial\sigma}\chi(\omega)=\int_\sigma \omega- \int_{\sigma}d\chi(\omega).
\end{align*}
Since the equality holds for every $k$-simplex we conclude (\ref{dos}) (see for example \cite[Chapter IV]{W}).\\

For $x\in B$ we consider $\varphi=\varphi_x:[0,1]\times B\to B$, $\varphi_x(t,y)=ty+(1-t)x$ and $\eta_t:B\to [0,1]\times B$, $\eta_t(y)=(t,y)$. We look for an explicit expression for $\chi(\omega)$:
\begin{align*}
\int_\sigma \chi(\omega) = \int_{C_\sigma}\omega = \int_{\varphi([0,1]\times\sigma)}\omega = \int_{[0,1]\times\sigma}\varphi^*\omega = \int_\sigma\int_0^1 \eta_s^*(\iota_{\frac{\partial}{\partial t}}\varphi^*\omega)ds.
\end{align*}
Where $\frac{\partial}{\partial t}$ is the vector field on $[0,1]\times B$ defined by $\frac{\partial}{\partial t}(s,y)=(1,0)$. The contraction of a $k$-form $\varpi$ with respect to a vector field $V$ is the $(k-1)$-form defined by  
$\iota_V\varpi_x(v_1,\ldots,v_{k-1})=\varpi_x(V(x),v_1,\ldots,v_{k-1}).$  We conclude that
$$\chi(\omega)=\int_0^1 \eta_t^*(\iota_{\frac{\partial}{\partial t}}\varphi^*\omega)dt.$$

Observe that the family of $k$-forms $\{\eta_t^*(\iota_{\frac{\partial}{\partial t}}\varphi^*\omega)\}_{t\in [0,1]}$ satisfies the hypothesis of Lemma \ref{Leibniz}, then $\chi(\omega)$ is smooth. By definition and the claim it satisfies equality (\ref{dos}). Observe that if $\omega$ is closed, then $\chi(\omega)$ is a primitive of $\omega$, so it is enough to prove the classic Poincaré's lemma. However, in our case we need a primitive in $L^\phi$, so we take a convenient average. 

Define
$$h(\omega)=\frac{1}{\Vol\left(\frac{1}{2}B\right)}\int_{\frac{1}{2}B}\chi_x(\omega)dx,$$
where $\frac{1}{2}B=B\left(0,\frac{1}{2}\right)$.

Since $(x,y)\mapsto \chi_x(\omega)_y$ is smooth in both variables we can use again Lemma \ref{Leibniz} to show that $h(\omega)$ is in $\Omega^k(B)$. Note that this works because we take the integral on a ball with closure included in $B$. Moreover, the derivative of $h$ is
\begin{equation*}
dh(\omega)=\frac{1}{\Vol\left(\frac{1}{2}B\right)}\int_{\frac{1}{2}B}d\chi_x(\omega)dx.
\end{equation*}
Then using (\ref{dos}) we have
\begin{equation}\label{homot}
dh(\omega)+h(d\omega)=\omega
\end{equation}
for all $\omega\in\Omega^k(B)$ with $k\geq 1$.


We have to prove that $h$ is well-defined and continuous from $L^\phi\Omega^k(B)$ to $L^\phi\Omega^{k-1}(B)$. To this end we first bound $|\chi_x(\omega)|_y$ for $y\in B$ and $\omega\in\Omega^k(B)$. Since $\iota_{\frac{\partial}{\partial t}}\varphi^*\omega$ is a form on $[0,1]\times B$ that is zero in the direction of $\frac{\partial}{\partial t}$, we have 
$|\eta_t^*(\iota_{\frac{\partial}{\partial t}}\varphi^*\omega)|_y=|\iota_{\frac{\partial}{\partial t}}\varphi^*\omega|_{(t,y)}$ for every $t\in (0,1)$ and $y\in B$. After a direct calculation we get the estimation $|\iota_{\frac{\partial}{\partial t}}\varphi^*\omega|_{(t,y)}\leq t^{k-1}|y-x||\omega|_{\varphi(t,y)}$. Using the assumption that $t\in (0,1)$, we can write
\begin{equation}\label{uno}
|\chi(\omega)|_y\leq \int_0^1|y-x||\omega|_{\varphi(t,y)}dt.
\end{equation}

Consider the function $u:\R^n\to\R$ defined by $u(z)=|\omega|_z$ if $z\in B$ and $u(z)=0$ in the other case. Using (\ref{uno}) and the change of variables $z=ty+(1-t)x$  we have
\begin{align*}
\Vol\left(\mathsmaller{\frac{1}{2}}B\right)|h(\omega)|_y		
&\leq \int_{B\left(ty,\frac{1-t}{2}\right)}\int_0^1|z-y|u(z)(1-t)^{-n-1}dtdz \\
&=  \int_{B(y,2)}|z-y|u(z)\left(\int_0^1 \mathds{1}_{B\left(ty,\frac{1-t}{2}\right)}(z)(1-t)^{-n-1} dt\right)dz. 	
\end{align*}
Observe that $\mathds{1}_{B\left(ty,\frac{1-t}{2}\right)}(z)=1$ implies that $|z-y|\leq 2(1-t)$. Then we have
$$\int_0^1\mathds{1}_{B\left(ty,\frac{1-t}{2}\right)}(z)(1-t)^{-n-1}dt\leq \int_0^{1-\frac{1}{2}|z-y|}(1-t)^{-n-1}dt= \int_{\frac{1}{2}|z-y|}^1 r^{-n-1}dr\preceq \frac{1}{|z-y|^n}.$$
This implies
$$Vol(\mathsmaller{\frac{1}{2}}B)|h(\omega)|_y\preceq \int_{B(y,2)}|z-y|^{1-n}u(z)dz.$$

Using this estimate we have
\begin{align*}
\|h(\omega)\|_{L^\phi}
    &=\inf\left\{\gamma>0 : \int_B\phi\left(\frac{|h(\omega)|_y}{\gamma}\right)dy\leq 1\right\}\\
    &\preceq \inf\left\{\gamma>0 : \int_B\phi\left(\int_{B(y,2)}|z-y|^{1-n}\frac{u(z)}{\gamma}dz\right)dy\leq 1\right\}.
\end{align*}
Since $\int_{B(y,2)}|z-y|^{1-n}dz<+\infty$, we can use Jensen's inequality and write
\begin{align*}
\|h(\omega)\|_{L^\phi}
    &\preceq \inf\left\{\gamma>0 : \Vol(B(0,3))\int_B\int_{B(0,3)}\phi\left(\frac{u(z)}{\Vol(B(0,3))\gamma}\right)\frac{dz}{|z-y|^{n-1}}dy\leq 1\right\}\\
    &=\inf\left\{\gamma>0 : \Vol(B(0,2))\int_{B(0,3)}\phi\left(\frac{u(z)}{\Vol(B(0,3))\gamma}\right)\left(\int_B\frac{dy}{|z-y|^{n-1}}\right)dz\leq 1\right\}.
\end{align*}
We have that there exists a constant $K>0$ such that $\int_B\frac{dy}{|z-y|^{n-1}}\leq K$ for all $z\in B(0,3)$, thus
$$\|h(\omega)\|_{L^\phi}\preceq \Vol(B(0,2))\|\omega\|_{L^{\tilde{K}\phi}}\preceq \|\omega\|_{L^\phi},$$
where $\tilde{K}=K\Vol(B(0,3))$. 

By the identity $dh(\omega)=\omega-h(d\omega)$ we also have 
$$\|dh(\omega)\|_{\phi}\leq \|\omega\|_{\phi}+\|h(d\omega)\|_{\phi}\preceq \|\omega\|_{\phi}+\|d\omega\|_{\phi}.$$
We conclude that $h$ is well-defined and bounded for the norm $|\ \ |_{L^\phi}$ in all degrees $k\geq 1$.

If $\omega=df$ for certain function $f$ we observe that
$$\eta_t^*(\iota_{\frac{\partial}{\partial t}}\varphi^*_x df) (y)= df_{\varphi_x(t,y)}(y-x)=(f\circ\alpha)'(t),$$
where $\alpha$ is the curve $\alpha(t)=\varphi_x(t,y)$. Then $\chi_x(df)(y)=f(y)-f(x)$, from which we get
$$h(df)=f-\frac{1}{Vol\left(\mathsmaller{\frac{1}{2}}B\right)}\int_{\frac{1}{2}B} f.$$
We define $h:L^p\Omega^0(B)\to L^p\Omega^{-1}(B)=\R$ by
$h(f)=\frac{1}{Vol\left(\mathsmaller{\frac{1}{2}}B\right)}\int_{\frac{1}{2}B} f$,
which is clearly continuous because $\frac{1}{2}B$ has finite Lebesgue measure. Then the identity (\ref{homot}) is true for all $\omega\in L^\phi\Omega^k(B)$ and $h$ is continuous in all degrees.
\end{proof}

The following lemma can be proved by a direct application of the Lipschitz condition.

\begin{lemma}\label{lemaOrliczLipschitz}
Let $M$ and $N$ be two Riemannian manifolds and $f:M\to N$ a bi-Lipschitz diffeomorphism with constant $L$. Then for every $k\in\N$ the pull-back $f^*:L^\phi\Omega^k(N)\to L^\phi\Omega^k(M)$ is continuous and its operator norm is bounded depending on $L$, $k$, $\phi$ and $n=\dim(M)$.
\end{lemma}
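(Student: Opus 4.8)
The plan is to reduce everything to a single pointwise comparison of the comass norms of $\omega$ and $f^*\omega$, and then push that comparison through the Luxembourg norm using the change-of-variables formula and the convexity of $\phi$. Since $f$ is a diffeomorphism, the exterior derivative commutes with the pull-back, $d(f^*\omega)=f^*(d\omega)$, so the $d\omega$-part of the norm $|\ \ |_{L^\phi}$ is controlled by the very same estimate applied one degree higher. Thus it suffices to prove a single inequality of the form $\|f^*\omega\|_{L^\phi(M)}\le C(L,n,k)\,\|\omega\|_{L^\phi(N)}$ for $k$-forms, and to note that $f^*\omega\in\Omega^k(M)$ is smooth because $f$ is a diffeomorphism.

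First I would record the pointwise bound. For unit vectors $v_1,\dots,v_k\in T_xM$ the bi-Lipschitz hypothesis gives $\|df_x v_i\|_{f(x)}\le L$, and multilinearity of $\omega_{f(x)}$ together with the estimate $|\omega_y(w_1,\dots,w_k)|\le |\omega|_y\prod_i\|w_i\|_y$ yields
$$|(f^*\omega)_x(v_1,\dots,v_k)|=|\omega_{f(x)}(df_x v_1,\dots,df_x v_k)|\le L^k|\omega|_{f(x)}.$$
Taking the supremum over unit frames gives the clean inequality $|f^*\omega|_x\le L^k|\omega|_{f(x)}$.

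Next I would integrate. Using the monotonicity of $\phi$ and then the change of variables $y=f(x)$ (whose Jacobian lies in $[L^{-n},L^n]$, since the singular values of $df_x$ lie in $[L^{-1},L]$), I obtain for every $\gamma>0$
$$\int_M \phi\left(\frac{|f^*\omega|_x}{\gamma}\right)dV_M\le L^n\int_N \phi\left(\frac{L^k|\omega|_y}{\gamma}\right)dV_N.$$
Now choosing $\gamma=L^{n+k}\|\omega\|_{L^\phi(N)}$, the argument inside becomes $|\omega|_y/\bigl(L^n\|\omega\|_{L^\phi(N)}\bigr)$, and the single convexity inequality $\phi(t/L^n)\le L^{-n}\phi(t)$ (valid because $L^n\ge 1$ and $\phi(0)=0$) absorbs the prefactor $L^n$, leaving exactly $\int_N\phi\bigl(|\omega|_y/\|\omega\|_{L^\phi(N)}\bigr)dV_N\le 1$ (the case $\|\omega\|_{L^\phi(N)}=0$ being trivial). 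Hence $\|f^*\omega\|_{L^\phi(M)}\le L^{n+k}\|\omega\|_{L^\phi(N)}$; applying this to $d\omega$ gives $\|d f^*\omega\|_{L^\phi(M)}=\|f^*d\omega\|_{L^\phi(M)}\le L^{n+k+1}\|d\omega\|_{L^\phi(N)}$, and summing the two contributions bounds the operator norm of $f^*$ by $L^{n+k+1}$.

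The only genuinely delicate point is the interplay between the two multiplicative constants — the $L^n$ coming from the volume distortion and the $L^k$ coming from the pointwise estimate — against the nonlinearity of the Luxembourg norm, which forbids simply factoring constants out of $\|\ \|_{L^\phi}$. The trick that makes it clean is not to treat the two constants separately (e.g. via Remark \ref{ObsEqivalenciaOrlicz}, which would only yield a non-explicit constant from the open mapping theorem), but to fold them into the single scaling $t\mapsto t/L^n$ inside $\phi$ and eliminate it with one application of convexity; this keeps the final constant completely explicit and, as it happens, independent of $\phi$.
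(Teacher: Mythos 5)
Your proof is correct and is precisely the ``direct application of the Lipschitz condition'' that the paper alludes to: the paper states this lemma without proof, and your pointwise comass bound $|f^*\omega|_x\le L^k|\omega|_{f(x)}$, combined with the change of variables (Jacobian in $[L^{-n},L^n]$) and the convexity inequality $\phi(t/L^n)\le L^{-n}\phi(t)$, supplies exactly the intended argument, with the bonus that your constant $L^{n+k+1}$ is explicit and independent of $\phi$. The only step you take for granted is the standard fact that $\int_N\phi\bigl(|\omega|_y/\|\omega\|_{L^\phi}\bigr)\,dV_N\le 1$ whenever $\|\omega\|_{L^\phi}>0$ (attainment of the Luxembourg infimum, which follows from monotone convergence and the continuity of the finite-valued convex function $\phi$), and this is harmless.
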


A \textit{bicomplex} is a family of topological vector spaces $\{C^{k,\ell}\}_{k,\ell\in\N^2}$ together with continuous linear maps $d':C^{k,\ell}\to C^{k+1,\ell}$ and $d'':C^{k,\ell}\to C^{k,\ell+1}$. We denote it by $(C^{*,*},d',d'')$.

\begin{lemma}[Lemma 5,\cite{Pa95}]\label{weyl} 
Let $(C^{*,*}, d', d'')$ be a bicomplex with 
$d'\circ d'' +d''\circ d' =0$. Suppose that for every $\ell \in \mathbb N$
the complex $(C^{*, \ell}, d')$ retracts to the subcomplex $(E^\ell := \mathrm{Ker}\ d'\vert _{C^{0,\ell}}\to 0 \to 0 \to \cdots)$. Then the complex $(D ^*, \delta)$, defined
by 
$$D^m = \bigoplus _{k + \ell = m} C^{k, \ell}\text{ and }\delta = d' + d'',$$ 
is homotopically equivalent to $(E^*, d'')$.
\end{lemma}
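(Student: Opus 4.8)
The plan is to recognize this as the standard total-complex statement for a double complex whose rows are contractible onto their left edge, and to prove it by assembling the given row-retractions into a single contraction and then perturbing the differential by $d''$. First I would extract the retraction data. For each fixed $\ell$ the hypothesis supplies continuous linear maps $\iota_\ell\colon E^\ell\hookrightarrow C^{0,\ell}$ (the inclusion of $E^\ell=\Ker d'|_{C^{0,\ell}}$), a projection $\pi_\ell\colon C^{0,\ell}\to E^\ell$ with $\pi_\ell\iota_\ell=\mathrm{Id}$, and homotopy operators $h_\ell\colon C^{k,\ell}\to C^{k-1,\ell}$ (with $h_\ell=0$ on $C^{0,\ell}$) satisfying $d'h_\ell+h_\ell d'=\mathrm{Id}-\iota_\ell\pi_\ell$, where $\iota_\ell\pi_\ell$ is understood to vanish in positive column degree. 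Assembling over $\ell$ gives continuous operators $\iota,\pi,h$ and the projection $P:=\iota\pi$ on the whole bigraded space, for which the single row identity $d'h+hd'=\mathrm{Id}-P$ holds on all of $D^*$, together with $d'P=Pd'=0$, $h\iota=0$ and $\pi\iota=\mathrm{Id}$.

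Next I define the comparison maps. The inclusion $\Psi:=\iota\colon(E^*,d'')\to(D^*,\delta)$ is a chain map: for $x\in E^\ell$ one has $\delta\Psi x=(d'+d'')\iota x=d''\iota x$ since $d'\iota=0$, and $d''x\in E^{\ell+1}$ because $d'd''x=-d''d'x=0$; thus $\delta\Psi=\Psi d''$. The map $\Phi\colon D^*\to E^*$ and the total homotopy $H\colon D^m\to D^{m-1}$ are produced by perturbing the assembled contraction by $d''$. Since $h$ lowers the column index $k$ by one while $d''$ preserves it, the operator $d''h$ is nilpotent in each fixed total degree, so the series
\[
H=\sum_{j\ge 0}(-1)^j\,h\,(d''h)^j,\qquad \Phi\big|_{C^{k,\ell}}=(-1)^k\,\pi\,(d''h)^k,
\]
reduce to finite sums on every component $C^{k,\ell}$ (the $j$-th term drops $k$ by $j+1$, hence vanishes once $j\ge k$). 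This is exactly the output of the Basic Perturbation Lemma applied to the perturbation $d'\rightsquigarrow d'+d''=\delta$ of the contraction of $(D^*,d')$ onto $(E^*,0)$; the induced differential on $E^*$ is $\pi d''\iota=d''|_{E^*}$ because $h\iota=0$ kills all higher terms, which is precisely why the target complex is $(E^*,d'')$.

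The verification consists of the identities $\Phi\Psi=\mathrm{Id}_{E^*}$, $\Psi\Phi-\mathrm{Id}_{D^*}=\delta H+H\delta$ and $\Phi\delta=d''\Phi$. The first is immediate, as $\pi\iota=\mathrm{Id}$ and $h\iota=0$ annihilate all correction terms. The second is the heart of the matter: expanding $\delta H+H\delta$ with the four structural relations $d'h+hd'=\mathrm{Id}-P$, $d'd''+d''d'=0$ and $(d')^2=(d'')^2=0$, the unwanted term $d''h+hd''$ produced at order $j$ is cancelled by the alternating-sign term of order $j+1$, leaving exactly $\mathrm{Id}-\Psi\Phi$; this telescoping is what the perturbation series is engineered to achieve. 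To keep the bookkeeping clean I would first normalize the assembled homotopy to satisfy the side conditions $h\iota=0$, $\pi h=0$ and $h^2=0$ (by the standard normalization, successively replacing $h$ by $(\mathrm{Id}-P)h$, then $h(\mathrm{Id}-P)$, then $hd'h$, without disturbing $\iota$ and $\pi$), after which the Basic Perturbation Lemma applies verbatim.

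Finally, continuity is automatic, which is why the statement lives comfortably in the topological category: each $D^m=\bigoplus_{k+\ell=m}C^{k,\ell}$ is a \emph{finite} direct sum, and $\Psi,\Phi,H$ are finite sums of compositions of the continuous operators $\iota,\pi,h,d',d''$; hence all three are continuous and we obtain a homotopy equivalence of topological cochain complexes. The main obstacle I anticipate is precisely the signed telescoping computation for $\delta H+H\delta$ in the presence of the non-commutativity of $P$ with $d''$; the device that resolves it is the perturbation series above (equivalently, enforcing the side conditions and invoking the Basic Perturbation Lemma), so that the only content to be added beyond Pansu's purely algebraic identity is the observation that finiteness in each total degree preserves continuity.
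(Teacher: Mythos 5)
Your proof is correct. Note that the paper does not actually prove this lemma: it is quoted from Pansu's Lemma~5 and the author explicitly defers the complete proof to the reference \cite{S}, so there is no in-text argument to compare against. Your route --- assembling the row contractions into a single contraction of $(D^*,d')$ onto $(E^*,0)$, normalizing the side conditions $h\iota=0$, $\pi h=0$, $h^2=0$, and then invoking the Basic Perturbation Lemma with perturbation $d''$ (locally nilpotent since $d''h$ strictly lowers the column degree, which is bounded in each total degree) --- is the standard modern packaging of exactly the explicit ``staircase'' induction that Pansu and \cite{S} carry out by hand, and it yields the same maps: $\Psi=\iota$, $\Phi|_{C^{k,\ell}}=\pm\,\pi(d''h)^k$, $H=\sum_j\pm\,h(d''h)^j$, with induced differential $\pi d''\iota=d''|_{E^*}$ because $hd''\iota=0$. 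Your observation that continuity is automatic (finite sums of compositions of continuous operators on finite direct sums) is precisely the point that makes the algebraic lemma valid in the topological category the paper needs. The only loose end is the sign bookkeeping in $\Phi$ and $H$, which depends on the chosen convention for the contraction identity and which you correctly flag as being absorbed by the perturbation-lemma formalism once the side conditions are enforced.
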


A complete proof of Lemma \ref{weyl} can be found in \cite{S}.

\begin{proof}[Proof of Proposition \ref{PropEq1}]
Let us define the bicomplex
$$C^{k,\ell}=\left\{\omega\in \prod_{U\in\mathcal{U}_\ell}L^\phi\Omega(U):\{\|\omega_U\|_{L^\phi}\}_{U\in\mathcal{U}_\ell},\{\|d\omega_U\|_{L^\phi}\}_{U\in\mathcal{U}_\ell}\in \ell^\phi(\mathcal{U}_\ell)\right\},$$
equipped with the norm 
$$\|\omega\|=\|\theta\|_{\ell^\phi}+\|\theta'\|_{\ell^\phi},$$
where $\theta(U)=\|\omega_U\|_{L^\phi}$ and $\theta'(U)=\|d\omega_U\|_{L^\phi}$. The derivatives are defined by
\begin{itemize}
    \item $(d'\omega)_U=(-1)^\ell d\omega_U$ for every $\omega\in C^{k,\ell}$,
    \item If $\omega\in C^{k,\ell}$ and $W\in\mathcal{U}_{\ell+1}$, $W=U_0\cap\ldots\cap U_{\ell+1}$, then 
    $$(d''\omega)_W=\sum_{i=0}^{\ell+1}(-1)^i(\omega_{U_0\cap\ldots U_{i-1}\cap U_{i+1}\cap\ldots\cap U_{\ell+1}})|_W.$$
\end{itemize}
It is easy to see that $d'$ and $d''$ are well-defined and continuous, and that $d'\circ d''+d''\circ d'=0$.

Observe that the elements of $\mathrm{Ker}\ d'|_{C^{0,\ell}}$ are the functions $g\in\prod_{U\in\mathcal{U}_\ell}L^{\phi}\Omega^0(U)$ satisfying the following conditions:
\begin{itemize}
\item $dg_U=0$ for all $U\in\mathcal{U}_\ell$, then $g_U$ is constant.
\item $\{\|g_U\|_{L^\phi}\}_{U\in\mathcal{U}_\ell}\in \ell^\phi(\mathcal{U}_\ell)=\ell^\phi(X_G^\ell)$.
\end{itemize} 
Using the construction of $X_G$ and the fact that $U$ is bi-Lipschitz (with uniform Lipschitz constant) to the Euclidean unit ball we have that $\mathrm{Ker}\ d'|_{C^{0,\ell}}$ is isomorphic to $\ell^\phi(X_G^\ell)$ and $d''$ coincides with the derivative on this space.

On the other hand, the elements of $\mathrm{Ker}\ d''|_{C^{k,0}}$ are of the form $\omega=\{\omega_U\}_{U\in\mathcal{U}}$ with 
$$\omega_U|_{U\cap U'}=\omega_{U'}|_{U\cap U'}\text{ if } U\cap U'\neq\emptyset.$$ 
We can take a $k$-form $\tilde{\omega}$ in $L^{\phi}\Omega^k(M)$ such that $\tilde{\omega}|_U=\omega_U$ for all $U\in\mathcal{U}$, then there is an isomorphism between $\mathrm{Ker}\ d''|_{C^{k,0}}$ and $L^{\phi} \Omega^k(G)$ for which $d'$ coincides with the derivative on the second space.\\

\underline{Claim 1}: The cochain complex $(C^{*,\ell},d')$ retracts to $(\Ker d'|_{C^{0,\ell}}\to 0\to\ldots)$ for all $\ell\in\N$.

For every $U\in\mathcal{U}_\ell$ consider $f_U:U\to B$ an $L$-bi-Lipschitz diffeomorphism ($L$ does not depend on $U$ and $B$ is the unit ball in the corresponding Euclidean space). We define $H:C^{k,\ell}\to C^{k-1,\ell}$ by
$$(H\omega)_U=(-1)^\ell f_U^* h (f_U^{-1})^*\omega_U,$$
where $h:L^\phi\Omega^k(B)\to L^\phi\Omega^{k-1}(B)$ is the map given by Lemma \ref{alciclicoOrlicz}. Here we consider $C^{-1,\ell}=\Ker d'|_{C^{0,\ell}}$ and $d':C^{-1,\ell}\to C^{0,\ell}$ the inclusion. One can easily verify that $Hd'+d'H=\mathrm{Id}$. Using Lemma \ref{lemaOrliczLipschitz} we have that $H$ is continuous.\\

\underline{Claim 2}: The cochain complex $(C^{k,*},d'')$ retracts to $(\Ker d''|_{C^{k,0}}\to 0\to\ldots)$ for all $k\in\N$.

We have to construct a family of bounded linear maps $P:C^{k,\ell}\to C^{k,\ell-1}$ ($\ell\geq 0$) such that $P\circ d''+d''\circ P =\mathrm{Id}$, where $C^{k,-1}=\mathrm{Ker}\ d''|_{C^{k,0}}$ and $d'':C^{k,-1}\to C^{k,0}$ is the inclusion.

Consider $\{\eta_U\}_{U\in\mathcal{U}}$ a partition of unity with respect to $\mathcal{U}$. If $\ell\geq 1$ and $\omega\in C^{k,\ell}$, then we define
$$(P \omega)_V=\sum_{U\in\mathcal{U}}\eta_U\omega_{U\cap V},$$
for all $V\in\mathcal{U}_{\ell-1}$. For $\omega\in C^{k,0}$ and $V\in\mathcal{U}$ we put
$$(P\omega)_V=\sum_{U\in\mathcal{U}}\eta_U\omega_U|_V.$$
A direct calculation shows that $P$ is as we wanted.\\

Applying Lemma \ref{weyl} we obtain that a complex $(D^*,\delta)$ is homotopically equivalent to $(\mathrm{Ker}\ d'|_{C^{0,*}},d'')$ and $(\mathrm{Ker}\ d''|_{C^{*,0}},d')$. The proof in the non-relative case ends using the above identifications.

To prove the relative case we have to consider the bicomplex $(C^{*,*}_\xi,d',d'')$, where $C^{k,\ell}_\xi$ is the subspace consisting of the elements $\omega$ of $C^{k,\ell}$ for which there exists $V\subset \overline{G}$, a neighborhood of $\xi$, such that $\omega_U\equiv 0$ if $U\subset V$. The above argument works in this case because all maps preserve the subspaces $C^{k,\ell}_\xi$.
\end{proof}

Since $\mathcal{L}^p\Omega^k(G,\mathcal{U})=L^p\Omega^k(G)$, the previous proposition finishes the proof of the $L^p$-case.

\begin{proposition}[Second step]\label{PropEq2}
The complexes $(\mathcal{L}^\phi\Omega^*(G,\mathcal{U}),d)$ and $(\mathcal{I}^\phi\Omega^*(G,\mathcal{U}),d)$ are homotopically equivalent. The same is true for complexes $(\mathcal{L}^\phi\Omega^*(G,\mathcal{U},\xi),d)$ and $(\mathcal{I}^\phi\Omega^*(G,\mathcal{U},\xi),d)$. 
\end{proposition}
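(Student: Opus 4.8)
The plan is to realize the inclusion $\iota:\mathcal{I}^\phi\Omega^*(G,\mathcal{U})\hookrightarrow\mathcal{L}^\phi\Omega^*(G,\mathcal{U})$ as a homotopy equivalence whose homotopy inverse is the convolution operator $T\omega=\omega*\kappa$ for a fixed kernel $\kappa$. By Proposition \ref{PropDerConv} the map $T$ sends differential forms to differential forms and satisfies $d(\omega*\kappa)=d\omega*\kappa$, so it is a chain map; the issue is to check that it lands in $\mathcal{I}^\phi$ and that it is chain homotopic to the identity on $\mathcal{L}^\phi$. First I would prove the central boundedness statement, namely that $T$ maps $\mathcal{L}^\phi\Omega^k(G,\mathcal{U})$ continuously into $L^\phi\Omega^k(G)$, with $|\omega*\kappa|_{L^\phi}\preceq|\omega|_{\mathcal{L}^\phi}$. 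Combined with the easier, purely local, boundedness of $T$ on $\mathcal{L}^\phi$ itself (obtained from the same kind of estimate applied chart by chart), this gives $T:\mathcal{L}^\phi\to\mathcal{I}^\phi$ continuous.

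For the central estimate I would argue as follows. By Lemma \ref{lemaNormaConvolucion} it suffices to bound the scalar convolution $|\omega|*\kappa$ in $L^\phi(G)$. Since $\supp(\kappa)$ is compact, $(|\omega|*\kappa)(x)$ only involves $|\omega|$ on a ball $B(x,r_0)$ of uniformly bounded radius and volume, so $\|\kappa\|_\infty$ together with Lemma \ref{OrliczL1} yields a pointwise bound $(|\omega|*\kappa)(x)\preceq\||\omega|\|_{L^1(B(x,r_0))}\preceq\||\omega|\|_{L^\phi(B(x,r_0))}$, with uniform constants coming from the bounded geometry of $G$. Covering $B(x,r_0)$ by the boundedly many charts $U\in\mathcal{U}$ that it meets and using the triangle inequality for the Luxemburg norm, I obtain $(|\omega|*\kappa)(x)\preceq\sum_{U}\theta(U)$, a sum over the finitely many $U$ near $x$, where $\theta(U)=\|\omega|_U\|_{L^\phi}$. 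Feeding this into the definition of the Luxemburg norm and using convexity of $\phi$ to split the finite sum, together with the uniform bounds on $\Vol(U)$ and on the overlap, I reduce $\int_G\phi((|\omega|*\kappa)/\gamma)$ to a constant multiple of $\sum_U\phi(c\,\theta(U)/\gamma)$, which by Remark \ref{ObsEqivalenciaOrlicz} gives $\||\omega|*\kappa\|_{L^\phi}\preceq\|\theta\|_{\ell^\phi}\le|\omega|_{\mathcal{L}^\phi}$. Applying the identical computation to $d(\omega*\kappa)=d\omega*\kappa$ controls the derivative, so $\omega*\kappa\in L^\phi\Omega^k(G)$.

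Next I would produce the chain homotopy $K$ with $\omega*\kappa-\omega=dK\omega+Kd\omega$. For each $z\in\supp(\kappa)$ I connect $e$ to $z$ by a short path and use the classical homotopy operator $h_z$ associated with the translation $R_z$ (of the same integral-along-the-deformation type as the operator $\chi$ in the proof of Lemma \ref{alciclicoOrlicz}), which satisfies $R_z^*\omega-\omega=dh_z\omega+h_zd\omega$. Setting $K\omega=\int_G h_z\omega\,\kappa(z)\,dz$ and integrating this identity against $\kappa(z)\,dz$ (with $\int_G\kappa=1$) produces the desired formula, the interchange of $d$ with the integral being justified as in Lemma \ref{lemaDerivadaDebil} and Lemma \ref{Leibniz}. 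Since $h_z\omega$ at $x$ only reads $\omega$ along a bounded path issuing from $x$, the operator $K$ is again a local averaging operator, so the very same estimate as above shows that $K$ is bounded on $\mathcal{L}^\phi$; and because a local averaging operator with compactly supported kernel is bounded on $L^\phi(G)$ by Young's convolution inequality, $K$ also preserves $\mathcal{I}^\phi$. Restricting $K$ to $\mathcal{I}^\phi$ then gives $T|_{\mathcal{I}^\phi}\simeq\mathrm{Id}$, while on $\mathcal{L}^\phi$ we have $\iota\circ T\simeq\mathrm{Id}$, so $\iota$ is a homotopy equivalence.

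Finally, for the relative statement I would only observe that $T$, $K$ and the $h_z$ all have finite propagation, bounded by the diameter of $\supp(\kappa)$ and the lengths of the chosen paths: if $\omega$ vanishes on a neighborhood $V$ of $\xi$, then $\omega*\kappa$ and $K\omega$ vanish on the neighborhood obtained by shrinking $V$ by this bounded amount, which is still a neighborhood of $\xi\in\partial G$. Hence all the maps restrict to the subcomplexes of forms vanishing near $\xi$ and the same homotopy works verbatim. The main obstacle is the global estimate of the second paragraph, that is, converting ``locally $L^\phi$ with $\ell^\phi$-controlled local norms'' into a genuine global $L^\phi$ bound; this is exactly the step that fails to be an equality for non-power Young functions and that forces the combined use of Lemma \ref{OrliczL1}, convexity of $\phi$, the bounded geometry of $G$, and the norm-equivalence of Remark \ref{ObsEqivalenciaOrlicz}.
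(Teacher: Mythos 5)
Your proposal follows essentially the same route as the paper: convolution $\omega\mapsto\omega*\kappa$ as the map into $\mathcal{I}^\phi$, a chain homotopy obtained by averaging over $z\in\supp(\kappa)$ the flow-line homotopy operator between $\mathrm{Id}$ and $R_z^*$, locality of both operators plus Jensen's inequality, Lemma \ref{OrliczL1}, bounded geometry and Remark \ref{ObsEqivalenciaOrlicz} for the $\mathcal{L}^\phi$-to-$L^\phi$ estimates, and finite propagation for the relative case. The only point to watch is that the $L^\phi$-boundedness you attribute to ``Young's convolution inequality'' needs the compactness of $\supp(\kappa)$ to control the right-translation Jacobian (the group need not be unimodular), which is exactly how the paper carries out that step.
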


Combining Propositions \ref{PropEq1} and \ref{PropEq2} we have the following diagram:

\medskip
\medskip
\medskip

\xymatrix{ &&& \ell^\phi C^0(X_G)\ar[r]^\delta \ar@/_/[d]_\sim  & \ell^\phi C^1(X_G)\ar[r]^\delta \ar@/_/[d]_\sim  & \ell^\phi C^2(X_G)\ar[r]^\delta \ar@/_/[d]_\sim  &\cdots\\
&&& \mathcal{L}^\phi\Omega^0(G,\mathcal{U})\ar[r]^d \ar@/_/[u] \ar@/_/[d]_\sim & \mathcal{L}^\phi\Omega^1(G,\mathcal{U}) \ar[r]^d  \ar@/_/[u] \ar@/_/[d]_\sim & \mathcal{L}^\phi\Omega^2(G,\mathcal{U}) \ar[r]^d \ar@/_/[u] \ar@/_/[d]_\sim &\cdots\\
&&& \mathcal{I}^\phi\Omega^0(G,\mathcal{U})\ar[r]^d \ar@/_/[u] & \mathcal{I}^\phi\Omega^1(G,\mathcal{U}) \ar[r]^d \ar@/_/[u] & \mathcal{I}^\phi\Omega^2(G,\mathcal{U}) \ar[r]^d \ar@/_/[u] & \cdots}

\medskip
\medskip

\begin{proof}
Consider the family of maps $*\kappa:\mathcal{L}^\phi\Omega^k(G,\mathcal{U})\to \mathcal{I}^\phi\Omega^k(G,\mathcal{U})$ given by the convolution with a smooth kernel $\kappa$.\\

\underline{Claim 1}: For a fixed $k=0,\ldots,\dim(G)$ the map  $*\kappa:\mathcal{L}^\phi\Omega^k(G,\mathcal{U})\to \mathcal{L}^\phi\Omega^k(G,\mathcal{U})$ is well-defined and continuous.

Let $\gamma>0$ and $U\in\mathcal{U}$, using Lemma \ref{lemaNormaConvolucion} we have
\begin{align*}\label{eq1}
\int_U\phi\left(\frac{|\omega*\kappa|_x}{\gamma}\right)dx
    &\leq \int_U \phi\left(\int_G\frac{C|\omega|_{x\cdot z}}{\gamma}\kappa(z)dz\right)dx\\
    &\leq\int_U\phi\left(\int_{x\cdot \supp(\kappa)}\frac{C|\omega|_y}{\gamma} dy\right)dx\\
    &\leq \int_U\phi\left(\sum_{U'\in\mathcal{N}_U}\left\|\frac{C\omega|_{U'}}{\gamma}\right\|_{L^1}\right)dx,
\end{align*}
where $\mathcal{N}_U=\{U'\in\mathcal{U}: U'\cap (x\cdot \supp(\kappa))\neq\emptyset\text{ for some } x\in U\}$. The bounded geometry implies that there exists $N$ a uniform bound of $\#\mathcal{N}_U$. Using this bound and Jensen's inequality we have
\begin{align*}
\int_U\phi\left(\frac{|\omega*\kappa|_x}{\gamma}\right)dx
    &\leq \frac{\Vol(U)}{\#\mathcal{N}_U}\sum_{U'\in\mathcal{N}_U}\phi\left(\left\|\frac{NC\omega|_{U'}}{\gamma}\right\|_{L^1}\right).
\end{align*}
Let $V$ be a uniform bound for $\Vol(U)$. Because of Lemma \ref{OrliczL1}, there exists a constant $D$ such that if $\beta$ is in $L^\phi\Omega^k(U)$ with $U\in\mathcal{U}$, then $\|\beta\|_{L^1}\leq D\|\beta\|_{L^\phi}$. Therefore
$$\int_U\phi\left(\frac{|\omega*\kappa|_x}{\gamma}\right)dx
    \leq \frac{V}{\#\mathcal{N}_U}\sum_{U'\in\mathcal{N}_U}\phi\left(\left\|\frac{DNC\omega|_{U'}}{\gamma}\right\|_{L^\phi}\right).$$
If $\gamma\geq DNC\|\omega|_{U'}\|_{L^\phi}$ for all $U'\in\mathcal{N}_U$, then 
$$\int_U\phi\left(\frac{|\omega*\kappa|_x}{\gamma}\right)dx\leq V.$$
By Remark \ref{ObsEqivalenciaOrlicz} there exists a constant $\mathcal{C}(V)$ such that
$$\|\omega*\kappa|_U\|_{L^\phi}\leq \mathcal{C}(V)\|\omega*\kappa|_U\|_{L^\frac{\phi}{V}}\leq \mathcal{C}(V)DNC\sum_{U'\in\mathcal{N}_U}\|\omega|_{U'}\|_{L^\phi}.$$
Denote $L=\mathcal{C}(V)DNC$ and take $\gamma>0$,
\begin{align*}
\sum_{U\in\mathcal{U}}\phi\left(\frac{\|\omega*\kappa|_U\|_{L^\phi}}{\gamma}\right) &\leq \sum_{U\in\mathcal{U}}\phi\left(\frac{L}{\gamma}\sum_{U'\in\mathcal{N}_U}\|\omega|_{U'}\|_{L^\phi}\right)\\
&\leq \sum_{U\in\mathcal{U}}\frac{1}{\#\mathcal{N}_U}\sum_{U'\in\mathcal{N}_U}\phi\left(\frac{NL}{\gamma}\|\omega|_{U'}\|_{L^\phi}\right).    
\end{align*}
Let $R>0$ be such that for every $U'\in\mathcal{U}$, $U'\in\mathcal{N}_U$ for at most $R$ open sets $U\in\mathcal{U}$. Then
$$\sum_{U\in\mathcal{U}}\phi\left(\frac{\|\omega*\kappa|_U\|_{L^\phi}}{\gamma}\right)\leq \sum_{U\in\mathcal{U}} R\phi\left(\frac{NL}{\gamma}\|\omega|_{U'}\|_{L^\phi}\right).$$
This means that, if $\theta(U)=\|\omega|_U\|_{L^\phi}$ and $\vartheta(U)=\|\omega*\kappa|_U\|_{L^\phi}$, then
$$\|\vartheta\|_{\ell^\phi}\leq NL \|\theta\|_{\ell^{R\phi}}\preceq \|\theta\|_{\ell^\phi}.$$
Using the same argument with $d(\omega*\kappa)=d\omega*\kappa$ we can conclude that $|\omega*\kappa|_{\mathcal{L}^\phi}\preceq |\omega|_{\mathcal{L}^\phi}$, which finishes the proof of Claim 1.\\

\underline{Claim 2}: Let $k=0,\ldots,\dim(G)$. The map $*\kappa:\mathcal{L}^\phi\Omega^k(G,\mathcal{U})\to  L^\phi\Omega^k(G)$ is well-defined and continuous. 

As above, if $\gamma>0$ we have, using the same arguments as before, 
\begin{align*}
\int_G\phi\left(\frac{|\omega*\kappa|_x}{\gamma}\right)dx
    &\leq
    \sum_{U\in\mathcal{U}}\int_U\phi\left(\frac{|\omega*\kappa|_x}{\gamma}\right)dx
    \\
    &\leq \sum_{U\in\mathcal{U}}\frac{\Vol(U)}{\#\mathcal{N}_U}\sum_{U'\in\mathcal{N}_U}\phi\left(\left\|\frac{DNC \omega|_{U'}}{\gamma}\right\|_{L^phi}\right)\\
    &\leq \sum_{U\in\mathcal{U}} VR\phi\left(\left\|\frac{L \omega|_{U}}{\gamma}\right\|_{L^\phi}\right).
\end{align*}
If we use again the notation $\theta(U)=\|\omega|_U\|_{L^\phi}$, we have $\|\omega\|_{L^\phi}\leq L\|\theta\|_{\ell^{VR\phi}}\preceq \|\theta\|_{\ell^\phi}$. Doing the same with the derivative we conclude the Claim 2.\\ 

Claims 1 and 2 imply that $*\kappa:\mathcal{L}^\phi\Omega^k(G,\mathcal{U})\to \mathcal{I}^\phi\Omega^k(G)$ is well-defined and continuous. Furthermore, by Proposition \ref{PropDerConv} we know that $*\kappa$ commutes with the derivative.

We will define a family of continuous maps $h:\mathcal{L}^\phi\Omega^k(G,\mathcal{U})\to\mathcal{L}^\phi\Omega^{k-1}(G,\mathcal{U})$ such that
\begin{equation}\label{homotopiaOrlicz1}
\left\{\begin{array}{cc}
        h(df)=f-i(f*\kappa) &\text{ if }f\in \mathcal{L}^\phi\Omega^0(G,\mathcal{U})\\
        h(d\omega)+dh(\omega)=\omega-i(\omega*\kappa) &\text{ if }\omega\in \mathcal{L}^\phi\Omega^k(G,\mathcal{U}),\ k\geq 1,
\end{array}\right.
\end{equation}
where $i$ denotes the inclusion (which is clearly continuous). If $h$ maps continuously $\mathcal{I}^\phi\Omega^k(G,\mathcal{U})$ on $\mathcal{I}^\phi\Omega^{k-1}(G,\mathcal{U})$ for every $k\geq 1$, the complexes $(\mathcal{L}^\phi\Omega^*(G,\mathcal{U}),d)$ and $(\mathcal{I}^\phi\Omega^*(G,\mathcal{U}),d)$ are homotopically equivalent. 

\medskip
\medskip
\medskip

\xymatrix{ &&& \mathcal{L}^\phi\Omega^0(G,\mathcal{U})\ar[r]_d \ar@/_/[d]_{*\kappa} & \mathcal{L}^\phi\Omega^1(G,\mathcal{U}) \ar[r]_d  \ar@/_/[d]_{*\kappa} \ar@/_1pc/[l]_h & \mathcal{L}^\phi\Omega^2(G,\mathcal{U}) \ar[r] \ar@/_/[d]_{*\kappa} \ar@/_1pc/[l]_h &\cdots \\
&&& \mathcal{I}^\phi\Omega^0(G,\mathcal{U})\ar[r]^d \ar@/_/[u]_i & \mathcal{I}^\phi\Omega^1(G,\mathcal{U}) \ar[r]^d \ar@/_/[u]_i \ar@/^1pc/[l]^h & \mathcal{I}^\phi\Omega^2(G,\mathcal{U}) \ar[r] \ar@/_/[u]_i \ar@/^1pc/[l]^h &\cdots}

\medskip
\medskip
\medskip

For every $z\in \supp(\kappa)$ we consider $Z\in \Lie(G)$ a left-invariant vector field such that $\exp(Z)=z$. Let $\varphi_t^Z$ be the flow associated with $Z$. Observe that 
$$\varphi_t^Z(x)=L_x \varphi^Z_t(e)=x\cdot \exp(tZ).$$

Given $\omega\in\mathcal{L}^\phi\Omega^k(G,\mathcal{U})$ with $k=1,\ldots,\dim (G)$, we define
$$h(\omega)_x=-\int_G\left(\int_0^1(\varphi_t^Z)^*\iota_Z\omega_x dt\right)\kappa(z)dz.$$
By Lemma \ref{Leibniz} it is smooth and its derivative is
$$dh(\omega)_x=-\int_G\left(\int_0^1(\varphi_t^Z)^*d\iota_Z\omega_x dt\right)\kappa(z)dz.$$
 
\underline{Claim 3}: $h$ and $*\kappa$ verify \eqref{homotopiaOrlicz1}.

Take $\omega$ a $k$-form in $\mathcal{L}^\phi\Omega^k(G,\mathcal{U})$ for $k\geq 1$, thus
\begin{align*}
h(d\omega)+dh(\omega)
    &=-\int_G\left(\int_0^1(\varphi_t^Z)^*(\iota_Zd+d\iota_Z)\omega\ dt\right)\kappa(z)dz.
\end{align*}

Recall the Cartan formula $L_Z\omega=\iota_Zd\omega+d\iota_Z\omega$ (see for example \cite{GHL}). Using the identity $\frac{\partial}{\partial t}(\varphi_t^Z)^*\omega=(\varphi_t^Z)^*L_Z\omega$ we obtain
\begin{align*}
h(d\omega)+dh(\omega)
    &=-\int_G\left(\int_0^1(\varphi_t^Z)^*L_Z\omega\ dt\right)\kappa(z)dz\\
    &=-\int_G\left(\int_0^1\frac{\partial}{\partial t}(\varphi_t^Z)^*\omega\ dt\right)\kappa(z)dz\\
    &=-\int_G((\varphi_t^Z)^*\omega-\omega)\kappa(z)dz=\omega-\omega*\kappa.
\end{align*}

Now consider $f\in\mathcal{L}^\phi\Omega^0(G,\mathcal{U})$. We have
$$h(df)_x =-\int_G\left(\int_0^1 df_{x\cdot \exp(tZ)}(Z(x))dt\right)\kappa(z)dz.$$
Let $\alpha:[0,1]\to G$ be the curve $\alpha(t)=x\cdot \exp(tZ)$. We have $\alpha'(t)=Z(\alpha(t))$, then
$$(f\circ\alpha)'(t)=d_{\alpha(t)}f(\alpha'(t))=df_{x\cdot \exp(tZ)}(Z(\alpha(t))).$$
Therefore
\begin{align*}
h(df)_x
    &=-\int_G \left(\int_0^1(f\circ\alpha)'(t)dt\right)\kappa(z)dz\\
    &=\int_G(f(\alpha(1))-f(\alpha(0)))\kappa(z)dz\\
    &=f(x)-f*\kappa(x).
\end{align*}

\underline{Claim 4}: $h:\mathcal{L}^\phi\Omega^k(G,\mathcal{U})\to\mathcal{L}^\phi\Omega^{k-1}(G,\mathcal{U})$ is well-defined and  continuous for every $k=1,\ldots,\dim(G)$.

First we estimate the operator norm of $h\omega$ at a point $x\in G$. Consider $v_1,\ldots,v_{k-1}\in T_xG$, then
\begin{align*}
|h(\omega)_x(v_1,&\ldots,v_{k-1})|
    =\left|\int_G\left(\int_0^1\omega_{\varphi_t^Z(x)}(Z(\varphi_t^Z(x)),d_x\varphi_t^Z(v_1),\ldots,d_x\varphi_t^Z(v_{k-1}))dt \right)\kappa(z)dz\right|\\
    &\leq \int_G\left(\int_0^1|\omega_{\varphi_t^Z(x)}(Z(\varphi_t^Z(x)),d_xR_{\exp(tZ)}(v_1),\ldots,d_xR_{\exp(tZ)}(v_{k-1}))|dt \right)\kappa(z)dz
\end{align*}

As in the proof of Lemma \ref{lemaNormaConvolucion} we have a uniform bound $|d_xR_{\exp(tZ)}| \leq M$ for all $z\in \supp(\kappa)$. Moreover, since left-invariant fields have constant norm we can write $\|Z(y)\|_y= C$ for every $y\in G$. Hence, if $\|v_1\|_x=\ldots=\|v_{k-1}\|_x=1$,
$$|h(\omega)_x(v_1,\ldots,v_{k-1})|\leq \int_G \left(\int_0^1 CM^{k-1}|\omega|_{\varphi_t^Z(x)}dt \right)\kappa(z)dz,$$
which implies
\begin{equation}\label{estimacionh}
|h(\omega)|_x\leq \int_G \left(\int_0^1 CM^{k-1}|\omega|_{\varphi_t^Z(x)}dt \right)\kappa(z)dz.
\end{equation}

Using \eqref{estimacionh} and Jensen's inequality we obtain
\begin{equation}\label{phih}
\phi\left(\frac{|h(\omega)|_x}{\gamma}\right)
\leq \int_G\left(\int_0^1\phi\left(\frac{CM^{k-1}}{\gamma}|\omega|_{\varphi_t^Z(x)}\right)dt\right)\kappa(z)dz.
\end{equation}
For $U\in\mathcal{U}$ denote $\theta(U)=\|\omega|_U\|_{L^\phi}$ and  $\vartheta(U)=\|h\omega|_U\|_{L^\phi}$. If $\gamma>0$ we have
\begin{align*}
\int_U\phi\left(\frac{|h(\omega)|_x}{\gamma}\right)dx
    &\leq \int_U\int_G\left(\int_0^1\phi\left(\frac{CM^{k-1}}{\gamma}|\omega|_{\varphi_t^Z(x)}dt\right)\right)\kappa(z)dzdx\\
    &=\int_G\left(\int_0^1\int_U\phi\left(\frac{CM^{k-1}}{\gamma}|\omega|_{\varphi_t^Z(x)}\right)dxdt\right)\kappa(z)dz.
\end{align*}
The identity $d_x\varphi^Z_t=d_xR_{\exp(tZ)}$ allows us to find $m>0$ such that $m<|Jac_x(\varphi_t^Z)|$ for all $z\in \supp(\kappa)$. Then
\begin{align*}
\int_U\phi\left(\frac{|h(\omega)|_x}{\gamma}\right)dx
    &\leq \int_G\left(\int_0^1\int_{E(U)}\frac{1}{m}\phi\left(\frac{CM^{k-1}}{\gamma}|\omega|_y\right)dydt\right)\kappa(z)dz\\
    &=\frac{1}{m}\int_{E(U)}\phi\left(\frac{CM^{k-1}}{\gamma}|\omega|_y\right)dy,
\end{align*}
where $E(U)$ is a neighborhood of $U$ with uniform radius (independent of $U$) such that $\varphi_t^Z(x)\in E(U)$ for all $z\in \supp(\kappa)$ and $x\in U$. Consider
$$\mathcal{V}_U=\{V\in\mathcal{U} : V\cap E(U)\neq\emptyset\},$$
then if $\gamma\geq CM^{k-1}\max\{\|\omega|_V\|_{L^{\frac{S}{m}\phi}} : V\in\mathcal{V}_U\}$, where $S\geq \#\mathcal{V}_U$ for all $U\in\mathcal{U}$,
$$\int_U\phi\left(\frac{|h(\omega)|_x}{\gamma}\right)dx\leq 1,$$
which implies
$$\|h(\omega)|_U\|_{L^\phi}\leq CM^{k-1}\max\{\|\omega|_V\|_{L^{\frac{S}{m}\phi}} : V\in\mathcal{V}_U\}\leq \mathcal{M}\sum_{V\in\mathcal{V}_U}\|\omega|_V\|_{L^\phi}$$
for some constant $\mathcal{M}$ that does not depend on $U$. Therefore
\begin{align*}
\sum_{U\in\mathcal{U}}\phi\left(\frac{\|h(\omega)|_U\|_{L^\phi}}{\gamma}\right)
    &\leq \sum_{U\in\mathcal{U}}\phi\left(\mathcal{M}\sum_{V\in\mathcal{V}_U}\frac{\|\omega|_V\|_{L^\phi}}{\gamma}\right)\\
    &\leq \sum_{U\in\mathcal{U}}\sum_{V\in\mathcal{V}_U}\frac{1}{\#\mathcal{V}_U}\phi\left(\frac{S\mathcal{M}\|\omega|_V\|_{L^\phi}}{\gamma}\right)\\
    &\leq \sum_{U\in\mathcal{U}}\mathcal{N}\phi\left(\frac{S\mathcal{M}\|\omega|_V\|_{L^\phi}}{\gamma}\right),
\end{align*}
where $\mathcal{N}\geq \#\{U\in\mathcal{U} : V\in\mathcal{V}_U\}$ for all $V\in\mathcal{U}$. From here we obtain
$$\|\vartheta\|_{\ell^\phi}\leq S\mathcal{M}\|\theta\|_{\ell^{\mathcal{N}\phi}}\preceq \|\theta\|_{\ell^\phi}.$$

Using the identity $dh(\omega)=\omega-i(\omega*\kappa)$ and the above estimate we obtain $$|h(\omega)|_{\mathcal{L}^\phi}\preceq |\omega|_{\mathcal{L}^\phi}.$$

\underline{Claim 5:} The map $h:L^\phi\Omega^k(G)\to L^\phi\Omega^{k-1}(G)$ is well-defined and continuous for every $k=1,\ldots,\dim(G)$. 

Using \eqref{phih} we have
\begin{align*}
\int_G\phi\left(\frac{|h(\omega)|_x}{\gamma}\right)dx
    &\leq \int_G\int_G\left(\int_0^1\phi\left(\frac{CM^{k-1}|\omega|_{\varphi_t^Z(x)}}{\alpha}\right)dt\right)\kappa(z)dzdx\\
    &\leq \int_G\left(\int_0^1\int_G\frac{1}{m}\phi\left(\frac{CM^{k-1}|\omega|_y}{\gamma}\right)dydt\right)\kappa(z)dz\\
    &=\int_G\frac{1}{m}\phi\left(\frac{CM^{k-1}|\omega|_y}{\gamma}\right)dy.
\end{align*}
From this we obtain $\|h(\omega)\|_{L^\phi}\preceq \|\omega\|_{L^\phi}$; and using again the equality (\ref{homotopiaOrlicz1}) we have $|h(\omega)|_{L^\phi}\preceq |\omega|_{L^\phi}$.

By Claims 4 and 5 we conclude that $h$ is well-defined and continuous from $\mathcal{I}^\phi\Omega^k(G,\mathcal{U})$ to $\mathcal{I}^\phi\Omega^{k-1}(G,\mathcal{U})$, which finishes the poof in the non-relative case.

The same argument works in the relative case. The only thing we have to verify is that the maps $*\kappa$ and $h$ preserve the relative subcomplexes, which is easy using the compactness of $\supp(\kappa)$.
\end{proof}

The proof of Theorem \ref{main} finishes with the following proposition.

\begin{proposition}\label{PropEq3}
The complexes $(\mathcal{I}^\phi\Omega^*(G,\mathcal{U}),d)$, $(L^\phi\Omega^*(G),d)$, and $(L^\phi C^*(G),d)$ are homotopically equivalent. The same result is true for the corresponding relative complexes.
\end{proposition}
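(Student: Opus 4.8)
The plan is to exhibit convolution $*\kappa$ with a fixed smooth kernel as a single homotopy inverse to the chain of natural (continuous) inclusions
$$\mathcal{I}^\phi\Omega^k(G,\mathcal{U})\hookrightarrow L^\phi\Omega^k(G)\hookrightarrow L^\phi C^k(G),$$
reusing the operators $h$ and $*\kappa$ already built in the proof of Proposition \ref{PropEq2}. Recall that those were shown to satisfy the pointwise identity $\omega-\omega*\kappa=dh(\omega)+h(d\omega)$ for smooth $k$-forms with $k\geq1$ (and $f-f*\kappa=h(df)$ in degree $0$), an identity coming from Cartan's formula that refers to no norm. Since $*\kappa$ commutes with $d$ (Proposition \ref{PropDerConv}(i)) and turns every locally integrable form into a smooth one (Proposition \ref{PropDerConv}(ii)), the only genuinely new point is to control the norms of $*\kappa$ and $h$ on $L^\phi\Omega^*(G)$ and on its completion $L^\phi C^*(G)$.

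The hard part will be the continuity of $*\kappa:L^\phi\Omega^k(G)\to\mathcal{I}^\phi\Omega^k(G,\mathcal{U})$; equivalently, that the convolution of an $L^\phi$-form lands in $\mathcal{L}^\phi\Omega^k(G,\mathcal{U})$. This is delicate because, as the example above shows, the plain inclusion $L^\phi\Omega^k\subset\mathcal{L}^\phi\Omega^k$ is false for a general Young function; it is exactly the smoothing effect of $\kappa$ that saves the day. First I would bound, for $\omega\in L^\phi\Omega^k(G)$ and $x\in U\in\mathcal{U}$,
$$|\omega*\kappa|_x\leq C\,(|\omega|*\kappa)(x)\leq \frac{C\|\kappa\|_\infty}{m}\int_{E(U)}|\omega|_y\,dy,$$
using Lemma \ref{lemaNormaConvolucion}, a uniform lower Jacobian bound $m$ for the right translations supported by $\kappa$, and the fact that $x\cdot\supp(\kappa)\subset E(U)$ for a neighbourhood $E(U)$ of $U$ of uniformly bounded size. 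The crucial gain is that the right-hand side is constant on $U$, so $\|\omega*\kappa|_U\|_{L^\phi}\preceq\int_{E(U)}|\omega|_y\,dy$ with a uniform constant (here $\|\mathbbm{1}_U\|_{L^\phi}$ is uniformly bounded since $\Vol(U)$ is). I would then convert these local $L^1$-masses into $\ell^\phi$-summability by Jensen's inequality: writing $\int_{E(U)}|\omega|_y\,dy=\Vol(E(U))\cdot\mathrm{avg}_{E(U)}|\omega|$ and using convexity of $\phi$ together with Jensen on the average, one obtains, for $\gamma=\gamma_0\gamma'$ with $\gamma_0=\|\omega\|_{L^\phi}$ and $\gamma'$ a large enough uniform constant, the bound $\phi(\|\omega*\kappa|_U\|_{L^\phi}/\gamma)\preceq\frac1{\gamma'}\int_{E(U)}\phi(|\omega|_y/\gamma_0)\,dy$; summing over $U$ and using the uniformly bounded overlap $R$ of the family $\{E(U)\}$ collapses the right-hand side to $\frac{R}{\gamma'}\int_G\phi(|\omega|_y/\gamma_0)\,dy\leq\frac R{\gamma'}\leq1$. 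Applying the same estimate to $d(\omega*\kappa)=d\omega*\kappa$ yields $|\omega*\kappa|_{\mathcal{L}^\phi}\preceq\|\omega\|_{L^\phi}+\|d\omega\|_{L^\phi}=|\omega|_{L^\phi}$; combined with the standard boundedness of $*\kappa$ on $L^\phi$ this gives $*\kappa:L^\phi\Omega^k(G)\to\mathcal{I}^\phi\Omega^k(G,\mathcal{U})$. Because this estimate only uses the $L^\phi$-bounds on $\omega$ and $d\omega$, the description of $L^\phi C^k(G)$ in Remark \ref{ObsDef} shows it applies verbatim to $\omega\in L^\phi C^k(G)$, giving $*\kappa:L^\phi C^k(G)\to\mathcal{I}^\phi\Omega^k(G,\mathcal{U})$ as well.

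With continuity in hand I would assemble the equivalences. The operator $h$ is continuous on $\mathcal{L}^\phi\Omega^*$ (Claim 4 of Proposition \ref{PropEq2}) and on $L^\phi\Omega^*$ (Claim 5), hence on $\mathcal{I}^\phi\Omega^*$; since $L^\phi\Omega^*(G)$ is dense in $L^\phi C^*(G)$ and $h$ is $|\cdot|_{L^\phi}$-continuous, it extends uniquely to a continuous $h$ on $L^\phi C^*(G)$, and the identity $dh+hd=\mathrm{Id}-{*\kappa}$ passes to the completion by continuity. Reading this identity on each of the three complexes shows that $*\kappa$ composed with the relevant inclusion is chain-homotopic to the identity in both directions, so each inclusion in the chain above is a homotopy equivalence with homotopy inverse $*\kappa$; by transitivity the three complexes $(\mathcal{I}^\phi\Omega^*(G,\mathcal{U}),d)$, $(L^\phi\Omega^*(G),d)$ and $(L^\phi C^*(G),d)$ are homotopically equivalent. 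Finally, for the relative statement the only thing to check is that $*\kappa$ and $h$ preserve the subspaces of forms vanishing on a neighbourhood of $\xi$: both operators are built by integrating over the compact set $\supp(\kappa)$ along right translations and the associated flows, so a form vanishing on an open neighbourhood of $\xi$ in $\overline{G}$ is sent to a form vanishing on a slightly smaller neighbourhood, exactly as observed at the end of the proof of Proposition \ref{PropEq2}; all the continuity estimates restrict to these subcomplexes, giving the equivalences in the Gromov-hyperbolic case.
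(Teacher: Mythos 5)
Your proposal is correct and follows essentially the same route as the paper: reuse $*\kappa$ and $h$ from Proposition \ref{PropEq2}, reduce everything to the continuity of $*\kappa:L^\phi C^k(G)\to\mathcal{I}^\phi\Omega^k(G,\mathcal{U})$ via the uniform bound $\|\omega*\kappa|_U\|_{L^\phi}\preceq\int_{E(U)}|\omega|_y\,dy$ followed by Jensen and the bounded overlap of $\{E(U)\}$, extend $h$ to the completion by continuity, and handle the relative case through the compactness of $\supp(\kappa)$. The only (cosmetic) difference is that you obtain the local bound from a pointwise estimate constant on $U$ rather than by integrating $\phi$ over $U$ first, which changes nothing substantive.
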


\begin{proof}
In this case we consider $*\kappa$ and $h$ defined as in Proposition \ref{PropEq2}. We have to prove that they are well-defined and continuous. Identities as (\ref{homotopiaOrlicz1}) are clearly satisfied.

\medskip
\medskip
\medskip

\xymatrix{ &&& L^\phi C^0(G)\ar[r]^d \ar@/_/[d]_{*\kappa}  & L^\phi C^1(G)\ar[r]^d \ar@/_/[d]_{*\kappa} \ar@/_1pc/[l]_h  & L^\phi C^2(G)\ar[r]^d \ar@/_/[d]_{*\kappa} \ar@/_1pc/[l]_h  &\cdots\\
&&& \mathcal{I}^\phi\Omega^0(G,\mathcal{U})\ar[r]^d \ar@/_/[u]_i \ar@/_/[d]_i & \mathcal{I}^\phi\Omega^1(G,\mathcal{U}) \ar[r]^d  \ar@/_/[u]_i \ar@/_/[d]_i \ar@/_1pc/[l]_h & \mathcal{I}^\phi\Omega^2(G,\mathcal{U}) \ar[r]^d \ar@/_/[u]_i \ar@/_/[d]_i \ar@/_1pc/[l]_h &\cdots\\
&&& L^\phi\Omega^0(G)\ar[r]^d \ar@/_/[u]_{*\kappa} & L^\phi\Omega^1(G) \ar[r]^d \ar@/_/[u]_{*\kappa} \ar@/_1pc/[l]_h & L^\phi\Omega^2(G) \ar[r]^d \ar@/_/[u]_{*\kappa} \ar@/_1pc/[l]_h &\cdots}

\medskip
\medskip
\medskip

The map $h:L^\phi C^k(G)\to L^\phi C^{k-1}(G)$ is the continuous extension of $h:L^\phi \Omega^k(G)\to L^\phi \Omega^{k-1}(G)$, then we have that all maps $h$ in the diagram are continuous.

We have to prove that the map $*\kappa:L^\phi C^k(G)\to\mathcal{I}^\phi\Omega^k(G,\mathcal{U})$ is well-defined and continuous for every $k=0,\ldots,\dim(G)$. Then so is $*\kappa:L^\phi\Omega^k(G)\to\mathcal{I}^\phi\Omega^k(G,\mathcal{U}).$ To this end, first observe that if $\omega\in L^\phi C^k(G)$ then $\omega*\kappa\in\Omega^k(G)$ by Lemma \ref{PropDerConv}.

Let $\gamma>0$. Using the estimate given in Lemma \ref{lemaNormaConvolucion} we have
\begin{align*}
\int_G\phi\left(\frac{|\omega*\kappa|_x}{\gamma}\right)dx
    &\leq \int_G\phi\left(\frac{C|\omega|*\kappa(x)}{\gamma}\right)dx\\
    &=\int_G\phi\left(\int_G\frac{C|\omega|_{xz}}{\gamma}\kappa(z)dz\right)dx\\
    &\leq \int_G\int_G\phi\left(\frac{C|\omega|_{xz}}{\gamma}\right)\kappa(z)dzdx.
\end{align*}
In the last line we use Jensen's inequality. As before we take $m>0$ with $m<|Jac_x(R_z)|$ for all $x\in G$ and $z\in \supp(\kappa)$, then
\begin{align*}
\int_G\phi\left(\frac{|\omega*\kappa|_x}{\gamma}\right)dx
    &= \int_G\left(\int_G\phi\left(\frac{C|\omega|_{xz}}{\gamma}\right)\frac{|Jac_x(R_z)|}{|Jac_x(R_z)|}dx\right)\kappa(z)dz\\
    &\leq \int_G\left(\int_G\frac{1}{m}\phi\left(\frac{C|\omega|_y}{\gamma}\right)dy\right)\kappa(z)dz\\
    &=\int_G\frac{1}{m}\phi\left(\frac{C|\omega|_y}{\gamma}\right)dy.
\end{align*}
If $\gamma\geq C\|\omega\|_{L^{\frac{\phi}{m}}}$ we have 
$$\int_G\phi\left(\frac{|\omega*\kappa|_x}{\gamma}\right)dx\leq 1,$$
which implies $\|\omega*\kappa\|_{L^\phi}\leq C\|\omega\|_{L^{\frac{\phi}{m}}}\preceq \|\omega\|_{L^\phi}$. In the same way we have $\|d(\omega*\kappa)\|_{L^\phi}=\|d\omega*\kappa\|_{L^\phi}\preceq \|d\omega\|_{L^\phi}$ and as a conclusion $|\omega*\kappa|_{L^\phi}\preceq |\omega|_{L^\phi}$.

On the other hand we denote $\vartheta(U)=\|\omega*\kappa|_U\|_{L^\phi}$ and estimate
\begin{align*}
\int_U\phi\left(\frac{|\omega*\kappa|_x}{\gamma}\right)dx
    &\leq \int_U\phi\left(\int_G \frac{C|\omega|_{xz}}{\gamma}\kappa(z)dz\right)dx\\
    &\leq D\phi\left(\int_{E(U)} \frac{C|\omega|_y}{\gamma}dy\right),
\end{align*}
where $E(U)$ is a neighborhood of $U$ with radius independent of $U$, and $D$ is a constant (also independent of $U$). We can deduce from this that
$$\|\omega*\kappa|_U\|_{L^\phi}\leq \frac{C}{\phi^{-1}(1/D)}\int_{E(U)}|\omega|_ydy.$$
In order to simplify the notation we write $\mathcal{C}=\frac{C}{\phi^{-1}(1/D)}$. Then
\begin{align*}
\sum_{U\in\mathcal{U}}\phi\left(\frac{\|\omega*\kappa|_U\|_{L^\phi}}{\gamma}\right)
    &\leq \sum_{U\in\mathcal{U}} \phi\left(\frac{\mathcal{C}}{\gamma}\int_{E(U)}|\omega|_y dy\right)\\
    &\leq \sum_{U\in\mathcal{U}}\frac{1}{\Vol(E(U))}\int_{E(U)}\phi\left(\frac{\mathcal{C}\Vol(E(U))|\omega|_y}{\gamma}\right)dy.
\end{align*}
Using that $\{E(U): U\in\mathcal{U}\}$ is a uniformly locally finite covering such that $\Vol(E(U))$ is bounded from above and below far from zero, we can find a uniform constant $L$ such that 
$$\sum_{U\in\mathcal{U}}\frac{1}{\Vol(E(U))}\int_{E(U)}\phi\left(\frac{\mathcal{C}\Vol(E(U))|\omega|_y}{\gamma}\right)dy\leq \int_G L\phi\left(\frac{L|\omega|_y}{\gamma}\right)dy.$$
This proves that $\|\vartheta\|_{\ell^\phi}\preceq \|\omega\|_{L^\phi}$. Doing the same for the derivative we obtain $|\omega*\kappa|_{\mathcal{L}^\phi}\preceq |\omega|_{L^\phi}$, that finish de proof of the Claim.

As in Proposition \ref{PropEq2} the relative case follows from the previous argument. 
\end{proof}


\bibliographystyle{alpha}
\bibliography{ref.bib}

\subsection*{Acknowledgments} 
This work is part of my thesis, which was supervised by Marc Bourdon and Matías Carrasco. I thank them for their guidance and valuable ideas.

\medskip
\medskip

\Addresses

\end{document}